\documentclass[12pt]{amsart}
\usepackage{amscd,amsmath,amsthm,amssymb}
\usepackage{amsfonts,amssymb,amscd,amsmath,enumerate,verbatim}
\usepackage[left]{lineno}
\usepackage{pstricks, pst-plot,pst-3d}
\usepackage{tikz}
\usepackage{epsfig}
\newpsstyle{fatline}{linewidth=1.5pt}
\definecolor{verylight}{gray}{0.97}
\definecolor{light}{gray}{0.9}
\definecolor{medium}{gray}{0.85}
\definecolor{dark}{gray}{0.6}
\def\NZQ{\mathbb}               

\def\ZZ{{\NZQ Z}}

\def\Ac{{\mathcal A}}

\def\opn#1#2{\def#1{\operatorname{#2}}} 
\opn\chara{char} \opn\length{\ell} \opn\pd{pd} \opn\rk{rk}
\opn\projdim{proj\,dim} \opn\injdim{inj\,dim} \opn\rank{rank}
\opn\depth{depth} \opn\grade{grade} \opn\height{height}
\opn\reg{reg}
\opn\embdim{emb\,dim} \opn\codim{codim}
\opn\Cl{Cl}

\opn\Tr{Tr} \opn\bigrank{big\,rank}
\opn\superheight{superheight}\opn\lcm{lcm}
\opn\trdeg{tr\,deg}
	\opn\reg{reg} \opn\lreg{lreg} \opn\ini{in} \opn\lpd{lpd}
	\opn\size{size} \opn\sdepth{sdepth}
	\opn\link{link}
    \opn\fdepth{fdepth}\opn\lex{lex}
	\opn\tr{tr}\opn\del{del}
	\opn\type{type}
	\opn\gap{gap}
	\opn\arithdeg{arith-deg}
	\opn\revlex{revlex}
	\opn\div{div} \opn\Div{Div} \opn\cl{cl} \opn\Cl{Cl}
	\opn\Spec{Spec} \opn\Supp{Supp} \opn\supp{supp} \opn\Sing{Sing}
	\opn\Ass{Ass} \opn\Min{Min}\opn\Mon{Mon}
	\opn\Ann{Ann} \opn\Rad{Rad} \opn\Soc{Soc}
	\opn\Im{Im} \opn\Ker{Ker} \opn\Coker{Coker} \opn\Am{Am}
	\opn\Hom{Hom} \opn\Tor{Tor} \opn\Ext{Ext} \opn\End{End}
	\opn\Aut{Aut} \opn\id{id}
	
	\opn\nat{nat}
	\opn\pff{pf}
	\opn\Pf{Pf} \opn\GL{GL} \opn\SL{SL} \opn\mod{mod} \opn\ord{ord}
	\opn\Gin{Gin} \opn\Hilb{Hilb}\opn\sort{sort}
	\opn\PF{PF}\opn\Ap{Ap}
	\opn\mult{mult}
	\opn\bight{bight}
    \opn\adj{adj}
	\opn\div{div}
	\opn\Div{Div}
	\opn\aff{aff}
	\opn\relint{relint} \opn\st{st}
	\opn\lk{lk} \opn\cn{cn} \opn\core{core} \opn\vol{vol}  \opn\inp{inp} \opn\nilpot{nilpot}
	\opn\link{link} \opn\star{star}\opn\lex{lex}\opn\set{set}
	\opn\width{wd}
	\opn\Fr{F}
	\opn\QF{QF}
	\opn\G{G}
	\opn\type{type}\opn\res{res}
	\opn\conv{conv}
	\opn\Deg{Deg}
	\opn\Sym{Sym}
	\opn\Con{Con}
	\opn\gr{gr}
	
	\def\pot#1#2{#1[\kern-0.28ex[#2]\kern-0.28ex]}

	\opn\dirlim{\underrightarrow{\lim}}
	\opn\inivlim{\underleftarrow{\lim}}
	\def\Implies{\ifmmode\Longrightarrow \else
		\unskip${}\Longrightarrow{}$\ignorespaces\fi}
	\def\implies{\ifmmode\Rightarrow \else
		\unskip${}\Rightarrow{}$\ignorespaces\fi}
	\def\iff{\ifmmode\Longleftrightarrow \else
		\unskip${}\Longleftrightarrow{}$\ignorespaces\fi}

	\let\:=\colon
	\newtheorem{Theorem}{Theorem}[section]
	\newtheorem{Lemma}[Theorem]{Lemma}
	\newtheorem{Corollary}[Theorem]{Corollary}

	\newtheorem{Example}[Theorem]{Example}

	\let\epsilon\varepsilon
	\let\kappa=\varkappa
	\def\qed{\ifhmode\textqed\fi
		\ifmmode\ifinner\quad\qedsymbol\else\dispqed\fi\fi}
	\def\textqed{\unskip\nobreak\penalty50
		\hskip2em\hbox{}\nobreak\hfil\qedsymbol
		\parfillskip=0pt \finalhyphendemerits=0}
	\def\dispqed{\rlap{\qquad\qedsymbol}}
	
	\opn\dis{dis}
	\def\pnt{{\raise0.5mm\hbox{\large\bf.}}}
	
	\opn\Lex{Lex}

\begin{document}
\title[Componentwise linear monomial ideals]{Componentwise linear monomial ideals}

\author[T.~Hibi]{Takayuki Hibi}
\author[S.~A.~ Seyed Fakhari]{Seyed Amin Seyed Fakhari}

\address{(Takayuki Hibi) Department of Pure and Applied Mathematics, Graduate School of Information Science and Technology, The University of Osaka, Suita, Osaka 565--0871, Japan}
\email{hibi@math.sci.osaka-u.ac.jp}
\address{(Seyed Amin Seyed Fakhari) Departamento de Matem\'aticas, Universidad de los Andes, Bogot\'a, Colombia}
\email{s.seyedfakhari@uniandes.edu.co}

\subjclass[2020]{13D02, 05E40}

\keywords{Componentwise linear ideal, Edge ideal, Squarefree lexsegment ideal}

\begin{abstract}
Let $S=K[x_1,\ldots,x_n]$ denote the polynomial ring in $n$ variables over a field $K$ with $\deg x_1=\cdots =\deg x_n = 1$ and ${\bf a} = (a_1,\ldots,a_n) \in \ZZ_{>0}^n$.  Given a squarefree monomial $u=x_{i_1} \cdots x_{i_d}$ of $S$ with $1 \leq i_1 < \cdots < i_d \leq n$, we set $u^{[{\bf a}]}:=x_{i_1}^{a_{i_1}}\cdots x_{i_d}^{a_{i_d}}$.  Let $I$ be a squarefree monomial ideal of $S$ and $G(I)$ its unique minimal set of monomial generators.  We introduce the monomial ideal $I^{[{\bf a}]}$ with $G(I^{[{\bf a}]})=\{u^{[{\bf a}]} : u \in G(I)\}$.  In the present paper, componentwise linearity of a squarefree monomial ideal $I$ and that of $I^{[{\bf a}]}$ is studied.  
\end{abstract}	
\maketitle
\thispagestyle{empty}

\section*{Introduction}
Let $S=K[x_1,\ldots,x_n]$ denote the polynomial ring in $n$ variables over a field $K$ with $\deg x_1=\cdots =\deg x_n = 1$  and ${\bf a} = (a_1,\ldots,a_n) \in \ZZ_{>0}^n$.  Given a squarefree monomial $u=x_{i_1} \cdots x_{i_d}$ of $S$ with $1 \leq i_1 < \cdots < i_d \leq n$, we set $u^{[{\bf a}]}:=x_{i_1}^{a_{i_1}}\cdots x_{i_d}^{a_{i_d}}$. Let $I$ be a squarefree monomial ideal of $S$ and $G(I)$ its unique minimal set of monomial generators.  We introduce the monomial ideal $I^{[{\bf a}]}$ with $G(I^{[{\bf a}]})=\{u^{[{\bf a}]} : u \in G(I)\}$. 

Let $I$ be a homogeneous ideal of $S$ and $I_{\langle j \rangle}$ the ideal of $S$ generated by all homogeneous polynomials of degree $j$ belonging to $I$.  We say that $I$ is {\em componentwise linear} \cite{componentwise} if $I_{\langle j \rangle}$ has a linear resolution for all $j$. 

In the present paper, componentwise linearity of a squarefree monomial ideal $I$ and that of $I^{[{\bf a}]}$ is studied.  First, in section \ref{sec1}, it is shown that if $I$ is a squarefree monomial ideal and if $I^{[{\bf a}]}$ is componentwise linear, then $I$ is componentwise linear (Corollary \ref{Cor1.3}).  Second, in Section \ref{sec2}, given a squarefree lexsegment ideal $I$ of $S$ and ${\bf a} = (a_1,\ldots,a_n) \in \ZZ_{>0}^n$, a classification for $I^{[{\bf a}]}$ to be componentwise linear is obtained (Theorem \ref{lexsegment}).  As a corollary, it is shown that for a squarefree Veronese ideal $I$ of $S$ of degree $d$, $I^{[{\bf a}]}$ is componentwise linear if and only if at most one component of ${\bf a} = (a_1,\ldots,a_n) \in \ZZ_{>0}^n$ is bigger than one (Corollary \ref{veronese}).  On the other hand, in Section \ref{sec3}, given a finite graph $G$ on $n$ vertices and ${\bf a} = (a_1,\ldots,a_n) \in \ZZ_{>0}^n$, a characterization for $I(G)^{[{\bf a}]}$, where $I(G)$ is the edge ideal of $G$, to be componentwise linear is obtained (Theorem \ref{chordal}).

\section{Compnentwise linearity of $I^{[{\bf a}]}$ and $I$} \label{sec1}

Let $I$ be a squarefree monomial ideal of $S$ and ${\bf a} = (a_1,\ldots,a_n) \in \ZZ_{>0}^n$.  Suppose that $I^{[{\bf a}]}$ is componentwise linear.  One can ask if $I$ componentwise linear. The goal of this section is to answer this question (see Corollary \ref{Cor1.3}).

Recall that the least common multiple of two monomials $u,v$ is denoted by $\lcm(u,v)$. Let $I$ be a monomial ideal of $S$ generated in a single degree $d$. For any pair of monomials $u,v\in G(I)$ the {\em relation graph} $G_I^{(u,v)}$ is defined as follows. The vertices of $G_I^{(u,v)}$ are those monomials $w\in G(I)$ which divide $\lcm(u,v)$. Two vertices $w,w'$ ae adjacent in $G_I^{(u,v)}$ if $\deg(\lcm(w,w'))=d+1$. The following result will be used frequently in this paper.

\begin{Lemma}\cite[Proposition 2.1]{BHZ} \label{linrel}
Let $I$ be a monomial ideal of $S$ which is generated in a single degree $d$. Then $I$ is linearly related if and only if for all $u,v\in G(I)$, the graph $G_I^{(u,v)}$ is connected.
\end{Lemma}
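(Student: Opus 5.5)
The plan is to work with the $\ZZ^n$-graded first syzygy module and reduce linear relatedness to a statement about spanning sets in each multigraded piece. Write $G(I)=\{u_1,\ldots,u_m\}$, let $F=\bigoplus_i S e_i$ with $\deg e_i=\deg u_i$ (multidegree), and let $Z=\Ker(F\to I)$, $e_i\mapsto u_i$. Recall the standard fact (Taylor, or a direct check) that $Z$ is generated by the pairwise relations
\[
\sigma_{ij}=\frac{\lcm(u_i,u_j)}{u_i}e_i-\frac{\lcm(u_i,u_j)}{u_j}e_j ,
\]
where $\sigma_{ij}$ has multidegree $\lcm(u_i,u_j)$. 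Since $I$ is generated in degree $d$, a relation $\sigma_{ij}$ is linear exactly when $\deg\lcm(u_i,u_j)=d+1$. Thus $I$ is linearly related if and only if the submodule $L\subseteq Z$ generated by these linear $\sigma_{ij}$ equals $Z$.

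First, I will describe the graded pieces. For a monomial $\mu$, the piece $F_\mu$ has $K$-basis $\{(\mu/u_i)e_i : u_i\mid\mu\}$, and the map to $I_\mu=K\mu$ sends each basis vector to $\mu$. Hence $Z_\mu$ is the hyperplane of coefficient vectors with sum zero, indexed by $V_\mu=\{w\in G(I): w\mid\mu\}$. Next, $L_\mu$ is spanned by the elements $(\mu/\lcm(w,w'))\sigma_{ww'}$, where $w,w'\in V_\mu$, $\deg\lcm(w,w')=d+1$ and $\lcm(w,w')\mid\mu$. The last condition is automatic once $w,w'\mid\mu$. In the basis above, these elements are precisely the vectors $b_w-b_{w'}$ for the edges $\{w,w'\}$ of the graph on $V_\mu$ with the adjacency rule of $G_I^{(u,v)}$. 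Elementary linear algebra now gives that the span of $\{b_w-b_{w'}\}$ over the edges of a graph equals the sum-zero subspace if and only if the graph is connected. The span is always contained in the sum-zero subspace, and it has dimension $|V_\mu|$ minus the number of connected components.

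Now take $\mu=\lcm(u,v)$, so that the graph on $V_\mu$ is exactly $G_I^{(u,v)}$. If $G_I^{(u,v)}$ is connected for all $u,v$, then $\sigma_{uv}\in Z_\mu=L_\mu$ for every pair. Since the $\sigma_{uv}$ generate $Z$, this gives $Z=L$, so $I$ is linearly related. Conversely, suppose $Z=L$. Then $Z_\mu=L_\mu$ for $\mu=\lcm(u,v)$. By the dimension count, the graph on $V_\mu$, namely $G_I^{(u,v)}$, must be connected. If one only wants connectivity of $u$ and $v$ to each other, it is enough that $b_u-b_v\in L_\mu$, but the dimension statement gives full connectivity directly.

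The main point needing care is the identification of $L_\mu$. An element of $L$ in degree $\mu$ is an $S$-combination of linear $\sigma_{ww'}$, so one must check that only those with $\lcm(w,w')\mid\mu$ contribute, each multiplied by the monomial $\mu/\lcm(w,w')$. This holds because $L$ is a $\ZZ^n$-graded submodule generated by multihomogeneous elements. The rest is the routine graph-theoretic linear algebra.
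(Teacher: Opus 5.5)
Your proof is correct and complete. The paper gives no proof of this lemma (it is quoted from \cite{BHZ}), and your multigraded argument is the standard proof of that proposition: $Z_\mu$ is the sum-zero hyperplane on $V_\mu$, $L_\mu$ is spanned by the edge vectors $b_w-b_{w'}$ of the graph on $V_\mu$, and you specialize to $\mu=\lcm(u,v)$.

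You leave one step implicit: that every relation of degree $d+1$ already lies in $L$, which is needed for ``linearly related $\Rightarrow$ $L=Z$''. It follows from your own description. For $\deg\mu=d+1$, any two distinct $w,w'\in V_\mu$ have $\lcm(w,w')=\mu$, so the graph on $V_\mu$ is complete and $L_\mu=Z_\mu$.
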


Let $I, I_1, I_2$ be monomial ideals of $S$ such that $G(I)$ is the disjoint union of $G(I_1)$ and $G(I_2)$. We say that $I=I_1+I_2$ is a {\em Betti splitting} \cite{FHVT} if for any pair of integers $i,j$, one has$$\beta_{i.j}(I)=\beta_{i,j}(I_1)+\beta_{i,j}(I_2)+\beta_{i-1,j}(I_1\cap I_2).$$

We now prove the first main result of this paper.

\begin{Theorem}
\label{main}
 Let $I$ be a squarefree monomial ideal of S and ${\bf a} = (a_1,\ldots,a_n) \in \ZZ_{>0}^n$.  Suppose that there is $1 \leq k \leq n$ with $a_k \geq 2$.  Let ${\bf a'}\in \ZZ_{>0}^n$ be the vector obtained from ${\bf a}$ by replacing $a_k$ with $a_k-1$. If $I^{[{\bf a}]}$ is componentwise linear, then $I^{[{\bf a'}]}$ is componentwise linear.
\end{Theorem}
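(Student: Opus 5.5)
The plan is to realise $I^{[{\bf a'}]}$ as a colon ideal of $I^{[{\bf a}]}$ and to show that the relevant colon operation preserves componentwise linearity. Put $J=I^{[{\bf a}]}$ and $J'=I^{[{\bf a'}]}$. I first check that $J'=J:x_k$. The colon $J:x_k$ is generated by the monomials $w/\gcd(w,x_k)$ with $w\in G(J)$. For $u\in G(I)$ with $x_k\nmid u$ this returns $u^{[{\bf a}]}=u^{[{\bf a'}]}$. For $u\in G(I)$ with $x_k\mid u$ it returns $u^{[{\bf a}]}/x_k=u^{[{\bf a'}]}$, because $a_k\ge 2$. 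So $J:x_k=J'$, and since $G(I)$ is an antichain and all exponents stay positive, $G(J')=\{u^{[{\bf a'}]}\}$ is indeed a minimal generating set. It therefore suffices to prove that $(J:x_k)_{\langle j\rangle}$ has a linear resolution for every $j$.

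Next I reduce to a statement about ideals with linear resolution. Let $L=J_{\langle j+1\rangle}$, which has a $(j+1)$-linear resolution by hypothesis. A monomial $f$ of degree $j$ lies in $J:x_k$ if and only if $x_kf\in L$. Hence $(J:x_k)_{\langle j\rangle}$ is the ideal generated by the degree-$j$ part of $L:x_k$. Every generator of $L:x_k$ has degree $j$ or $j+1$, so this ideal is exactly $(L:x_k)_{\langle j\rangle}$. The task is thus: for a monomial ideal $L$ with a $(j+1)$-linear resolution, show that $(L:x_k)_{\langle j\rangle}$ has a $j$-linear resolution. In our situation we may also use the special shape of $L$, namely that $L$ is the degree-$(j+1)$ component of an ideal of the form $I^{[{\bf a}]}$ with $a_k\ge 2$.

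For this I would compute multigraded Betti numbers with the upper Koszul complex (equivalently, Hochster-type formulas). Take a multidegree ${\bf b}$ with $b_k\ge 1$, and let $F$ range over squarefree subsets. Then $(L:x_k)_{{\bf b}-F}=L_{{\bf b}+e_k-F}$, so $\beta_{i,{\bf b}}(L:x_k)=\beta_{i,{\bf b}+e_k}(L)$. Since $L$ is $(j+1)$-linear, this vanishes unless $|{\bf b}|=i+j$, which is the linear range for a $j$-linear ideal. For the degree-$j$ component I would apply the same comparison to $(L:x_k)_{\langle j\rangle}$ itself. In multidegrees with $b_k\ge 1$, the subideal generated in degree $j$ and the full colon differ only by generators of degree $j+1$, namely those $w\in G(L)$ with $x_k\nmid w$. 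Here I would use the linear-quotient or Betti-splitting structure, splitting off these generators as in the definition of Betti splitting, to control the difference.

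The main obstacle is the multidegrees with $b_k=0$. There the complex computes the Betti numbers of the $x_k$-free part $\{m : x_k\nmid m,\ x_km\in L\}$. This part is generated by $w/x_k$ for those $w\in G(L)$ whose $x_k$-degree is exactly one. For a general $L$ this piece can fail to be linear. This is exactly where $a_k\ge2$ must be used. Every minimal generator of $J$ divisible by $x_k$ is divisible by $x_k^{a_k}$, so an element of $L$ with $x_k$-degree exactly one arises only as $x_k\cdot v$ with $v\in J_{j}$. Consequently the $x_k$-free part of $(L:x_k)_{\langle j\rangle}$ coincides with the $x_k$-free part of $J_{\langle j\rangle}$. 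That part is the restriction of the componentwise linear ideal $J_{\langle j\rangle}$ to the variables other than $x_k$, and restriction preserves linearity because multigraded Betti numbers of a restriction form a subset of the original ones. I expect the most delicate point to be gluing the $b_k=0$ and $b_k\ge1$ contributions, which I would do via the short exact sequence $0\to (L:x_k)(-1)\to L\to L/x_kL\cap L\to 0$ in the multigraded setting. If that gluing fails, a fallback is Lemma~\ref{linrel}: verify that each relation graph $G^{(u,v)}$ of $(J:x_k)_{\langle j\rangle}$ is connected by lifting paths from the corresponding graphs of $J_{\langle j+1\rangle}$, which at least gives linear relatedness as a first step.
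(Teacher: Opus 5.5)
Your reduction is correct and is the paper's Claim 1 in colon form. Set $L=J_{\langle j+1\rangle}$ and $M:=(L:x_k)_{\langle j\rangle}$. Then $L:x_k=M+L_2$, where $L_2$ is generated by the $x_k$-free generators of $L$, and $x_kM$ is the paper's $L_1$. Your treatment of multidegrees with $b_k=0$ is also right; it is exactly the fact the paper uses to show that $\phi_1$ vanishes.

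\textbf{The gap is the range $b_k\ge 1$.} There you only have $\beta_{i,{\bf b}}(L:x_k)=\beta_{i,{\bf b}+e_k}(L)$, and passing from $L:x_k$ to $M$ is the whole difficulty. You cannot appeal to ``linear-quotient or Betti-splitting structure'': $L$ is only known to have a linear resolution, and the splitting $L=x_kM+L_2$ is what must be proved. The difference is not trivial at the chain level either. If $w\in G(J)$ has degree $j+1$ and $x_k\nmid w$, then $w\in L_2\setminus M$ because $a_k\ge 2$. So at ${\bf b}=\deg(x_kw)$ the face $\{k\}$ lies in the upper Koszul complex of $L:x_k$ but not in that of $M$.

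\textbf{The missing idea is the paper's Claim 2:} every minimal generator of $x_kM\cap L_2$ has $x_k$-degree exactly one. This is where linearity of $L$ enters, via Lemma \ref{linrel}. Suppose $v_1\in G(L)$ has $\deg_{x_k}v_1=t\ge 2$ and $v_2\in G(L_2)$. Along a path in the connected graph $G_L^{(v_1,v_2)}$ the $x_k$-degree changes by at most one per step. So some vertex $v_1'$ has $x_k$-degree $t-1\ge 1$, and then $\lcm(v_1',v_2)\in x_kM\cap L_2$ properly divides $\lcm(v_1,v_2)$.

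With Claim 2 in hand your route does close, and it avoids the Tor-map criterion for Betti splittings:
\begin{enumerate}
\item Since $L_2:x_k=L_2$, one has $x_kM\cap L_2=x_k(M\cap L_2)$. So $M\cap L_2$, like $L_2$, is generated by $x_k$-free monomials and has no $\Tor$ in multidegrees with $b_k\ge1$.
\item The exact sequence $0\to M\cap L_2\to M\oplus L_2\to L:x_k\to 0$ then gives $\beta_{i,{\bf b}}(M)=\beta_{i,{\bf b}+e_k}(L)$ whenever $b_k\ge 1$.
\item Combined with your $b_k=0$ argument, this settles all multidegrees.
\end{enumerate}
No ``gluing'' is needed, since multigraded Betti numbers are computed one multidegree at a time. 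The sequence you proposed for gluing is not exact anyway: $x_kL\cap L=x_kL$, while multiplication by $x_k$ maps $(L:x_k)(-1)$ onto $L\cap(x_k)$. The relation-graph fallback would only give linear relatedness of $(J:x_k)_{\langle j\rangle}$, not a linear resolution.

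So the completed argument replaces the paper's verification that $\phi_1=\phi_2=0$ with a direct multidegree comparison. Both routes, however, rest on Claim 2, and that is absent from your proposal.
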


\begin{proof}
Dividing $I$ by the greatest common divisor of its generators, we may assume that $\height{I}\geq 2$. Set $L:=I^{[{\bf a}]}$ and $J:=I^{[{\bf a'}]}$.  Let, say, $k=1$. Note that $x_1J\subseteq L$. We must prove that for each integer $j\geq 1$, the ideal $J_{\langle j \rangle}$ has a linear resolution. Let $L_1$ be the ideal of $S$ generated by those monomials in $G(L_{\langle j+1 \rangle})$ which are divisible by $x_1$. Let $L_2$ denote the ideal generated by $G(L_{\langle j+1 \rangle})\setminus G(L_1)$. One has  $L_{\langle j+1 \rangle}=L_1+L_2$.

\medskip

\noindent
{\bf (Claim 1.)} $L_1=x_1J_{\langle j \rangle}$.

\medskip

\noindent
{\it Proof of Claim 1.} The inclusion $x_1J_{\langle j \rangle}\subseteq L_1$ follows from $x_1J\subseteq L$.  To prove 
$L_1 \subseteq x_1J_{\langle j \rangle}$,
let $w\in G(L_1)$. In particular, $w$ is divisible by $x_1$ and ${\rm deg}(w)=j+1$.  It is enough to show that $w/x_1\in J$. Since $w\in L=I^{[{\bf a}]}$, there is $w_0\in G(I)$ for which $w_0^{[{\bf a}]}$ divides $w$. If $x_1$ does not divide $w_0$, then $w_0^{[{\bf a}]}=w_0^{[{\bf a'}]}$ and so, $w/x_1$ is divisible by $w_0^{[{\bf a'}]}$.  Hence $w/x_1\in J$. If $x_1$ divides $w_0$, then $w_0^{[{\bf a}]}=x_1w_0^{[{\bf a'}]}$. Hence,  $w/x_1$ is divisible by $w_0^{[{\bf a'}]}=w_0^{[{\bf a}]}/x_1$. Again, $w/x_1\in J$. This proves Claim 1.

\medskip

\noindent
{\bf (Claim 2.)} $L_1\cap L_2=\big(x_1u : u\in G(L_2)\setminus G(L)\big)$.

\medskip

\noindent
{\it Proof of Claim 2.} First, we show that the right-hand side is contained in the left-hand side. Clearly, for each $u\in G(L_2)\setminus G(L)$, one has $x_1u\in L_2$. So, we only need to prove that $x_1u\in L_1$. Since $u\in L\setminus G(L)$, there is a variable $x_t$ dividing $u$ for which $u/x_t\in L$.  One has $x_1u/x_t\in L_1$ which implies that $x_1u\in L_1$.

Second, we show that the left-hand side is contained in the right-hand side. Let $v\in G(L_1\cap L_2)$. Then there are $v_1\in G(L_1)$ and $v_2\in G(L_2)$ for which $v=\lcm{(v_1,v_2)}$.  It follows from the definition of $L_1$ that $x_1$ divides $v_1$. Set $t:=\deg_{x_1}(v_1)\geq 1$. As $\deg(v_1)=\deg(v_2)=j+1$ and $x_1$ does not divide $v_2$, there is a variable $x_r$ for which $\deg_{x_r}(v_2)>\deg_{x_r}(v_1)$. If $t=1$, it follows from $a_1\geq 2$ and the definition of $L$ that $v_1/x_1\in L$. Therefore, $v_1x_r/x_1\in G(L_2)\setminus G(L)$. One has $v_1x_r=\lcm(v_1, v_1x_r/x_1)\in L_1\cap L_2$. Since $v\in G(L_1\cap L_2)$ is divisible by $v_1x_r$, we deduce that$$v=v_1x_r=x_1(v_1x_r/x_1)\in \big(x_1u : u\in G(L_2)\setminus G(L)\big).$$

Suppose that $t\geq 2$. Since $L_{\langle j+1 \rangle}$ has a linear resolution, it is linearly related.  So, we conclude from Lemma \ref{linrel} that the relation graph $G_{L_{\langle j+1 \rangle}}^{(v_1,v_2)}$ is connected.  Consequently, there is $v_1'\in G(L_{<j+1>})$ dividing $\lcm{(v_1,v_2)}=v$ for which $\deg_{x_1}(v_1')=t-1\geq 1$. Thus, $v_1'\in G(L_1)$ and $\lcm(v_1',v_2)\in L_1\cap L_2$ strictly divides $v$, as$$\deg_{x_1}(\lcm(v_1',v_2))=t-1<\deg_{x_1}(v_1)=\deg_{x_1}(v).$$This contradicts $v\in G(L_1\cap L_2)$, and completes the proof of Claim 2.

\medskip

We show that $L_{\langle j+1 \rangle}=L_1+L_2$ is a Betti splitting. Following \cite[Proposition 2.1]{FHVT}, our mission is to prove that for each $p,q$, the natural maps$$\phi_1: \Tor_p^S{(K, L_1\cap L_2)}_q\longrightarrow\Tor_p^S{(K, L_1)}_q$$and$$\phi_2: \Tor_p^S{(K, L_1\cap L_2)}_q\longrightarrow\Tor_p^S{(K, L_2)}_q$$ are zero.

Clearly, $\phi_2$ vanishes.  In fact, 
for ${\bf b}=(b_1, \ldots, b_n)\in \ZZ^n$ with $\Tor_p^S{(K, L_2)}_{{\bf b}}\neq 0$, one has $b_1=0$.  Furthermore, Claim 2 guarantees that each monomial generator of $L_1\cap L_2$ is divisible by $x_1$. This shows that $\phi_2$ is the zero map.

We prove that $\phi_1$ is zero.  Let $a\in \Tor_p^S{(K, L_1\cap L_2)}_q$. We show that $\phi_1(a)=0$. Suppose on the contrary, that $\phi_1(a)\neq 0$. Let ${\bf c} =(c_1, \ldots, c_n)$ denote the multidegree of $a$.  Again, Claim 2 says that each monomial generator of $L_1\cap L_2$ is divisible by $x_1$, but not by $x_1^2$. This yields that $c_1=1$. It then follows that 
\begin{eqnarray}
\label{ggggg}   
\Tor_p^S{(K, L_1)}_{\bf c}\cong \Tor_p^S{(K, L_1')}_{\bf c},
\end{eqnarray}
where $L_1'=(\{u\in L_1 :\deg_{x_1}(u)=1\})$. Since $0\neq \phi_1(a)\in \Tor_p^S{(K, L_1)}_{\bf c}$, the above isomorphism (\ref{ggggg}) yields that $\Tor_p^S{(K, L_1')}_{\bf c}\neq 0$. On the other hand, $L_1'=x_1L_1''$, for some monomial ideal $L_1''$ of $S$. It follows from $a_1\geq 2$ and the definition of $L$ that $L_1''$ is the ideal generated by those monomial generators of $L_{\langle j \rangle}$ which are not divisible by $x_1$.  Since $L_{\langle j \rangle}$ has a linear resolution, \cite[Lemma 4.4]{HHZ} says that $L_1''$ has a linear resolution. As a consequence, $L_1'$ has a linear resolution, which implies that $|{\bf c}|=p+j+1$. However, this is impossible, as ${\bf c}=\deg(a)$, $a\in \Tor_p^S{(K, L_1\cap L_2)}_q$ and, by Claim 2, $L_1\cap L_2$ is generated in degree $j+2$.

Therefore, $L_{\langle j+1 \rangle}=L_1+L_2$ is a Betti splitting.  It then follows that for each $p,q$,
\begin{eqnarray}
\label{hhhhh}
\beta_{p,q}(L_{\langle j+1 \rangle})=\beta_{p,q}(L_1)+\beta_{p,q}(L_2)+\beta_{p-1,q}(L_1\cap L_2).
\end{eqnarray}
Since $L_{\langle j+1 \rangle}$ has a linear resolution, it follows from the above equality (\ref{hhhhh}) that $L_1$ has linear resolution. Finally, combining with Claim 1, we conclude that $J_{\langle j \rangle}$ has a linear resolution, as desired.
\, \, \, \, \, \, \, 
\end{proof}

\begin{Corollary}
\label{Cor1.3}
 Let $I$ be a squarefree monomial ideal of S and ${\bf a} \in \ZZ_{>0}^n$.  Suppose that $I^{[{\bf a}]}$ is componentwise linear.  Then $I$ is componentwise linear.
\end{Corollary}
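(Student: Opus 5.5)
We will derive the corollary from Theorem \ref{main} by repeatedly lowering the exponents one unit at a time, using induction on $|{\bf a}| - n = \sum_{i=1}^n (a_i - 1)$.

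Let ${\bf 1}=(1,\ldots,1)$. Since every generator $u$ of $I$ is squarefree, $u^{[{\bf 1}]}=u$. Hence $I^{[{\bf 1}]}=I$, and it suffices to show that $I^{[{\bf 1}]}$ is componentwise linear.

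In the base case $\sum_i(a_i-1)=0$ we have ${\bf a}={\bf 1}$, and there is nothing to prove. For the inductive step, suppose $\sum_i(a_i-1)>0$. Then some index $k$ satisfies $a_k\geq 2$. Let ${\bf a'}$ be obtained from ${\bf a}$ by replacing $a_k$ with $a_k-1$; it still lies in $\ZZ_{>0}^n$. Theorem \ref{main} applies directly, since $I$ is squarefree, $a_k\geq 2$, and $I^{[{\bf a}]}$ is componentwise linear. It gives that $I^{[{\bf a'}]}$ is componentwise linear. Since $\sum_i(a'_i-1)=\sum_i(a_i-1)-1$, the induction hypothesis yields that $I^{[{\bf 1}]}=I$ is componentwise linear.

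The only points to check are that ${\bf a'}$ stays in $\ZZ_{>0}^n$, which holds because $a_k\geq 2$, and that Theorem \ref{main} has no further hypotheses on ${\bf a}$, which it does not. With these checked there is no real obstacle; all the work is in Theorem \ref{main}.
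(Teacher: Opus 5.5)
Your proof is correct and is exactly the intended argument. The paper states the corollary without proof as an immediate consequence of Theorem \ref{main}: lower the exponents one unit at a time down to ${\bf 1}$, and use $I^{[{\bf 1}]}=I$. Your induction on $\sum_i(a_i-1)$ is that argument written out.
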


\section{Squarefree lexsegment ideals} \label{sec2}

Let $S=K[x_1,\ldots,x_n]$ denote the polynomial ring in $n$ variables over a field $K$ with $\deg x_1 = \cdots =\deg x_n= 1$ and $<_{\rm lex}$ the lexicographic order (\cite[Example 2.1.2 (a)]{HHgtm260}) on $S$ induced by the ordering $x_1 > \cdots > x_n$.
Recall from \cite{lexsegment} that a squarefree monomial ideal $I$ of $S$ (not necessarily generated in a single  degree) is {\em squarefree lexsegment} if for any squarefree monomial $u\in I$ and any squarefree monomial $v\in S$ with $\deg u = \deg v$ and $u <_{\rm lex} v$, one has $v \in I$. It is well known that any squarefree lexsegment ideal is componentwise linear.

\begin{Lemma} \label{1necessary}
Let $I$ be a squarefree lexsegment ideal of $S$ and suppose that an integer $\ell\in [n]$ enjoys the property that there exist $u,v\in G(I)$ for which 
(i) $x_{\ell}$ divides $v$, 
(ii) $v/x_{\ell}$ divides $u$ and 
(iii) $\deg u> \deg v$. 
Then, given ${\bf a} = (a_1,\ldots,a_n) \in \ZZ_{>0}^n$ with $a_{\ell}\geq 2$, the ideal $I^{[{\bf a}]}$ is not componentwise linear.
\end{Lemma}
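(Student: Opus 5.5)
We argue by contraposition, first reducing to the simplest vector ${\bf a}$. Suppose $I^{[{\bf a}]}$ is componentwise linear with $a_\ell\geq 2$. Applying Theorem \ref{main} repeatedly, we lower every coordinate $a_k$ with $k\neq \ell$ to $1$ and lower $a_\ell$ to $2$. Componentwise linearity is preserved at each step. Hence it suffices to treat ${\bf a}={\bf 1}+{\bf e}_\ell$, i.e.\ $a_\ell=2$ and $a_k=1$ for $k\neq\ell$; we will show that $L:=I^{[{\bf a}]}$ is then not componentwise linear. With this ${\bf a}$, a generator $g\in G(I)$ gives $g^{[{\bf a}]}=x_\ell g$ if $x_\ell\mid g$ and $g^{[{\bf a}]}=g$ otherwise.

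Set $d:=\deg v$, $D:=\deg u>d$ and $w:=v/x_\ell$, so $w\mid u$. First observe that $x_\ell\nmid u$. Otherwise $v=x_\ell w$ would divide $u$, and since $\deg u>\deg v$ this contradicts $u\in G(I)$. Hence $u\in G(L)$, and it is a minimal generator of degree $D$, so $u\in G(L_{\langle D\rangle})$.

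Next, write $u=w m_0$ with $m_0$ squarefree, coprime to $w$, and $\deg m_0=D-d+1\geq 2$. Choose a divisor $m'$ of $m_0$ of degree $D-d-1\geq 0$. Then
\[
g:=v^{[{\bf a}]}m'=x_\ell^2wm'
\]
is a monomial of $L$ of degree $D$, so $g\in G(L_{\langle D\rangle})$, and $g\neq u$ since $x_\ell\mid g$. We have $\lcm(u,g)=x_\ell^2u$.

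We now show that $u$ is an isolated vertex of the relation graph $G^{(u,g)}_{L_{\langle D\rangle}}$. Any neighbour $h$ of $u$ has degree $D$, divides $x_\ell^2u$, and satisfies $\deg\lcm(u,h)=D+1$. This forces $h=x_\ell u/x_k$ for some variable $x_k\mid u$. Since $\deg_{x_\ell}h=1$, membership $h\in L$ needs a generator $g'\in G(I)$ with $g'^{[{\bf a}]}\mid h$. If $x_\ell\mid g'$ then $\deg_{x_\ell}g'^{[{\bf a}]}=2>1$, which is impossible. So $x_\ell\nmid g'$ and $g'\mid u/x_k$, a proper divisor of $u$ lying in $I$, contradicting $u\in G(I)$. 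Thus $u$ has no neighbours, while $g$ is another vertex, so the graph is disconnected. By Lemma \ref{linrel}, $L_{\langle D\rangle}$ is not linearly related and hence has no linear resolution; therefore $L$ is not componentwise linear.

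The main point to check carefully is this neighbour analysis, in particular that every $h$ with $\deg\lcm(u,h)=D+1$ dividing $x_\ell^2u$ must be of the form $x_\ell u/x_k$. The other delicate point is that the degree bookkeeping $\deg m_0\geq 2$ uses the strict inequality $\deg u>\deg v$. The reduction step is a direct application of Theorem \ref{main}.
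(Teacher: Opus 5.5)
Your proof is correct, but it takes a different route from the paper.

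\textbf{The paper's route.} The paper argues directly for an arbitrary ${\bf a}$. It compares $d=\deg u^{[{\bf a}]}$ with $d'=\deg v^{[{\bf a}]}$ and pads the lower-degree monomial: by a power of $x_\ell$ when $d\geq d'$, and by a power of a variable $x_p$ dividing $u$ but not $v$ when $d<d'$. In the second case it uses $d'-d\leq a_\ell-2$. Connectivity of the relation graph then yields a vertex $w$ with $\deg_{x_\ell}(w)=a_\ell-1$. Such a $w$ must be divisible by $u^{[{\bf a}]}$, so its degree is too large.

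\textbf{Your route.} You first use Theorem \ref{main} to reduce to ${\bf a}={\bf 1}+{\bf e}_\ell$. In that setting:
\begin{itemize}
\item the case distinction disappears, because $\deg u\geq \deg v+1=\deg v^{[{\bf a}]}$;
\item the vertices of $G^{(u,g)}_{L_{\langle D\rangle}}$ are explicit, since they all divide $x_\ell^2u$;
\item the step of finding a vertex of intermediate $x_\ell$-degree becomes the direct observation that the only candidate neighbours $x_\ell u/x_k$ of $u$ are not in $L$, so $u$ is isolated.
\end{itemize}

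\textbf{Comparison.} Your argument depends on Theorem \ref{main}, whose Betti-splitting proof is the heaviest part of the paper. The paper's proof of this lemma needs only Lemma \ref{linrel}. In exchange, your combinatorial verification is fully explicit and avoids the paper's rather terse path argument in the relation graph. Like the paper's proof, yours never uses that $I$ is squarefree lexsegment, so both arguments give the statement for arbitrary squarefree monomial ideals.
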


\begin{proof}
Set $J:=I^{[{\bf a}]}$, $d:=\deg u^{[{\bf a}]}$ and $d':=\deg v^{[{\bf a}]}$. Since $u$ and $v$ are squarefree monomials belonging to $G(I)$, it follows from  (ii) that $x_{\ell}$ does not divide $u$. Moreover, we conclude from (iii) that there are distinct variables $x_p, x_q$ which divide $u$ but not $v$. By contradiction, assume that $J$ is componentwise linear. The following two cases arise.

\medskip

\noindent
{\bf (Case 1.)} Let $d\geq d'$. Set $v':=x_{\ell}^{d-d'}v^{[{\bf a}]}$, $k:=d=\deg v'$. Since $J$ is componentwise linear, $J_{\langle k\rangle}$ has a linear resolution. In particular, $J_{\langle k\rangle}$ is linearly related. We deduce from Lemma \ref{linrel} that the relation graph $G_{J_{\langle k\rangle}}^{(u^{[{\bf a}]},v')}$ is connected. Hence, there is a monomial $w\in J_{\langle k\rangle}$ in the variables dividing $u$ or $v$ with $\deg_{x_\ell}(w)=a_{\ell}-1$. It follows from the definition of $J=I^{[{\bf a}]}$ that $w/x_{\ell}^{a_{\ell}-1}\in J$. It then follows from (ii) that $w/x_{\ell}^{a_{\ell}-1}$ is divisible by $u^{[{\bf a}]}$.  This yields that $\deg(w)\geq (a_{\ell}-1)+d>k$, since $a_{\ell}\geq 2$.  This is a contradiction.

\medskip

\noindent
{\bf (Case 2.)} Let $d< d'$. It follows from (ii) that$$d'-d\leq a_{\ell}-a_p-a_q\leq a_{\ell}-2.$$Set $u':=x_p^{d'-d}u^{[{\bf a}]}$, $k:=d'=\deg u'$. By the same argument as in Case 1, the relation graph $G_{J_{\langle k\rangle}}^{(u',v^{[{\bf a}]})}$ is connected. Hence, there is a monomial $w\in J_{\langle k\rangle}$ in the variables dividing $u$ or $v$ with $\deg_{x_\ell}(w)=a_{\ell}-1$. It follows from the definition of $J=I^{[{\bf a}]}$ that $w/x_{\ell}^{a_{\ell}-1}\in J$. It then follows from (ii) that $w/x_{\ell}^{a_{\ell}-1}$ is divisible by $u^{[{\bf a}]}$. This yields that $$\deg(w)\geq (a_{\ell}-1)+d>d'=k,$$ since $d-d'\leq a_{\ell}-2$.  This is a contradiction.
\hspace{5.5cm}
\end{proof}


\begin{Lemma} \label{sufficient}
Let $I$ be a squarefree lexsegment ideal and ${\bf a} = (a_1,\ldots,a_n) \in \ZZ_{>0}^n$.  Suppose the conditions $(\sharp)$ and $(*)$ as follows:

\smallskip

$(\sharp)$ For each $\ell\in [n]$ with $a_{\ell}\geq 2$, there do not exist $u,v\in G(I)$ satisfying (i), (ii) and (iii) of Lemma \ref{1necessary}. 

\smallskip

$(*)$ There exists a (possibly empty) subset $\{i_1, \ldots, i_s\}\subseteq [n]$ with $i_1<\cdots < i_s$ satisfying the conditions as follows:

\begin{itemize}
\item
$a_{i_1} \geq 2, \ldots, a_{i_s}\geq 2$; 
\item
$a_k=1$ for each $k\in [n]\setminus \{i_1,\ldots, i_s\}$; 
\item
If $u\in G(I)$ is divisible by $x_{i_j}$ ($2\leq j\leq s$), then it is divisible by $x_{i_{j-1}}$ as well. 
\end{itemize}
Then $I^{[{\bf a}]}$ is a componentwise linear ideal.
\end{Lemma}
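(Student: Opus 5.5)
The plan is induction on $s$, raising one exponent at a time and checking at each stage that componentwise linearity survives. It is convenient to use the equivalent characterization that a monomial ideal is componentwise linear if it has \emph{linear quotients} with respect to an order of its minimal generators that is nondecreasing in degree. For $s=0$ we have $I^{[{\bf a}]}=I$, and a squarefree lexsegment ideal has linear quotients when $G(I)$ is ordered by degree and, within each degree, lexicographically decreasing. For $s\geq 1$, I will try to show directly that $I^{[{\bf a}]}$ has linear quotients, with $G(I^{[{\bf a}]})=\{u^{[{\bf a}]}\}$ ordered by a suitable adaptation of the lex order.

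The first step is to understand the structure that $(\sharp)$ and $(*)$ impose. By $(*)$, the generators divisible by $x_{i_j}$ are all divisible by $x_{i_1}\cdots x_{i_{j-1}}$. Combined with the lexsegment property, this forces the $x_{i_j}$ to sit at the beginning of the variable order in the relevant degrees. Indeed, if $u\in G(I)$ contains $x_{i_j}$ and $x_m$ with $m<i_{j-1}$ is a variable absent from $u$, then swapping $x_{i_{j-1}}$ for $x_m$ and using lex should give a contradiction. Condition $(\sharp)$ says that for a generator $v$ containing $x_{i_j}$, no generator of larger degree contains $v/x_{i_j}$. So the generators in which the heavy variables occur are ``isolated'' from higher-degree generators obtained by dropping a heavy variable. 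I would record these as a structural lemma. Morally, for each $j$ the generators containing $x_{i_1}\cdots x_{i_j}$ should have degrees no larger than those of the generators built from smaller initial sets.

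The second step is to define the order on $G(I^{[{\bf a}]})$. I would order the generators $u^{[{\bf a}]}$ by increasing $\deg u^{[{\bf a}]}$, breaking ties by the lex order of $u$. Then, for $w=u^{[{\bf a}]}$ and each earlier $w'=u'^{[{\bf a}]}$, I need to produce an earlier generator $w''$ with $w''\colon w=x_m$ for some variable $x_m$ dividing $w'\colon w$. There are two cases.

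\begin{itemize}
\item If $u'$ has the same $I$-degree as $u$ and precedes it in lex order, the usual exchange $u''=x_m u/x_r$ with $m<r$, $x_m\mid u'$, $x_m\nmid u$ is available. If $x_r$ is a heavy variable, the colon becomes $x_m$ only when $a_m=1$; otherwise it becomes $x_m^{a_m}$. So I must choose $m$ to be a non-heavy variable, or use the chain condition of $(*)$ to argue that the heavy variables of $u'$ not in $u$ can be avoided.
\item If $u'$ has smaller $I$-degree, the lexsegment structure gives a generator $u''=x_m u/\ldots$ in lower degree, and I must check that $\deg u''^{[{\bf a}]}\leq\deg w$ in the reordered sense. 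This is exactly where $(\sharp)$ is needed: it rules out the configuration of Lemma~\ref{1necessary}, where dropping a heavy variable from a lower-degree generator lands inside a higher-degree one and the weighted degrees invert.
\end{itemize}

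The main obstacle is the second step, namely choosing a global order that handles the degree inversions caused by the weights. A generator of smaller squarefree degree can have larger weighted degree, so the linear quotient order must interleave the degrees rather than follow the $I$-degrees. I would try first to avoid fixing the order by hand. The idea is to show instead that $I^{[{\bf a}]}$ is obtained from $I$ by successively raising $a_{i_s}$, then $a_{i_{s-1}}$, and so on by one, and to prove that each such elementary step preserves linear quotients. Because $x_{i_s}$ is, by $(*)$, the ``last'' heavy variable in every generator containing it, polarization-type arguments should apply. Comparing with Theorem~\ref{main} run in reverse, I would decompose $I^{[{\bf a}]}_{\langle j+1\rangle}=L_1+L_2$ as there and use Betti splittings. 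This only works if the intersection $L_1\cap L_2$ is generated in degree $j+2$ with linear resolution, and I expect $(\sharp)$ to be precisely what guarantees this.
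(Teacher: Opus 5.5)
\textbf{There is a genuine gap: the proposal does not prove the statement.} Every step where the hypotheses would have to be used is left as ``should'', ``I would try'' or ``I expect''. You also abandon the weighted-degree order for a Betti-splitting scheme that is equally unverified. Several concrete ingredients fail.

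\emph{Your structural lemma is false.} $I=(x_1,x_2x_3)\subset K[x_1,x_2,x_3]$ is squarefree lexsegment, and with ${\bf a}=(1,a_2,a_3)$, $a_2,a_3\ge 2$, both $(\sharp)$ and $(*)$ hold. Now $u=x_2x_3\in G(I)$ contains $x_{i_2}=x_3$ and misses $x_1$ with $1<i_1$. But the swapped monomial $x_1x_3$ lies in $I$ without being a minimal generator, so $(*)$, which only constrains $G(I)$, is not contradicted. Here the heavy variables in fact sit at the \emph{end} of the order.

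\emph{Your case split for the weighted-degree order is incomplete.} An earlier generator $u'^{[{\bf a}]}$ can come from a $u'$ of the same $I$-degree that is lex-\emph{later} than $u$, or from a $u'$ of larger $I$-degree. For example, with $I_3(2)$ and ${\bf a}=(2,1,1)$, $x_2x_3$ precedes $x_1^2x_2$. These are exactly the cases the weights produce.

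\emph{Running Theorem~\ref{main} backwards is circular as it stands.} Claim~2 there (case $t\ge 2$) uses that $L_{\langle j+1\rangle}$ is linearly related, and the vanishing of $\phi_1$ uses that $L_{\langle j\rangle}$ has a linear resolution. These are the conclusions you are after. You would also need $L_1\cap L_2$ to have a linear resolution, which you do not establish.

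\textbf{The missing idea is a choice of variable order.} Keep the light variables first in their natural order and put the heavy ones last, $x_{i_1}>\cdots>x_{i_s}$. Then show that $I^{[{\bf a}]}$ is weakly polymatroidal for this order (hence has linear quotients, hence is componentwise linear), by induction on $s$, as the paper does.
\begin{itemize}
\item Generators not divisible by $x_{i_s}$ form a squarefree lexsegment ideal in fewer variables, so induction covers them.
\item Take a pair $w_1,w_2\in G(I)$ with $x_{i_s}\mid w_2$, so $x_{i_1}\cdots x_{i_s}\mid w_2$. The first variable $x_t$ at which they differ is light. The required exchange $x_t\,w^{[{\bf a}]}/x_r$, with $x_r$ after $x_t$, comes straight from the lexsegment property applied to $x_tw/x_r$. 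Since $a_t=1$, the weights do not interfere. This is exactly your requirement that exchanges introduce only light variables, now guaranteed by the order.
\item The only configuration without such an exchange is: $x_{i_s}\nmid w_1$, $x_t\mid w_1$, and every variable of $w_2$ after $x_t$ also divides $w_1$. Via $(*)$ and lex this forces $w_2=x_{i_s}w_1/w''$. If $\deg w''=1$, one exchanges with $x_{i_s}$. If $\deg w''\ge 2$, the pair is precisely configuration (i)--(iii) of Lemma~\ref{1necessary} with $\ell=i_s$, which $(\sharp)$ excludes.
\end{itemize}
Note that this step uses $(*)$ in the direction opposite to your swap. One adds $x_{i_s}$ and removes a light variable from the minimal generator $w_1$. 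A minimal generator dividing the result then either contains $x_{i_s}$, contradicting $(*)$, or properly divides $w_1$, contradicting minimality.
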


\begin{proof}
Set $J:=I^{[{\bf a}]}$. Using induction on $s$, we show that $J$ is a weakly polymatroidal ideal \cite[Pages 266--267]{HHgtm260} with respect to the ordering of the variables as follows:$$x_1>x_2\cdots > x_{i_1-1}> x_{i_1+1}>\cdots > x_{i_2-1}>x_{i_2+1}> \cdots >x_n>x_{i_1}>\cdots > x_{i_s}.$$ The case $s=0$ is obvious, as $I$ is a weakly polymatroidal ideal. So, assume that $s\geq 1$. Let $G(I) = \{u_1,\ldots,u_t\} \sqcup \{v_1,\ldots,v_m\}$, where each $u_k$ is not divisible by $x_{i_s}$ and where each $v_\ell$ is divisible by $x_{i_s}$ (and so, by $x_{i_1}, \ldots, x_{i_{s-1}}$). 
Then $(u_1, \ldots, u_t)$ is a squarefree lexsegment ideal of $K[x_1, \ldots, x_{i_s-1},x_{i_s+1}, \ldots, x_n]$. It follows from the induction hypothesis  that  the ideal $(u_1^{[{\bf a}]}, \ldots, u_t^{[{\bf a}]})$ is a weakly polymatroidal ideal.

In order to show that $J$ is a weakly polymatroidal ideal, we choose monomials $w_1^{[{\bf a}]}, w_2^{[{\bf a}]}\in G(J)$. Since $(u_1^{[{\bf a}]}, \ldots, u_t^{[{\bf a}]})$ is a weakly polymatroidal ideal, without loss of generality, we may assume that $w_2^{[{\bf a}]}\in \{v_1^{[{\bf a}]}, \ldots, v_m^{[{\bf a}]}\}$. Since $I$ is a squarefree monomial ideal and $w_2$ is divisible by $x_{i_1}, \ldots, x_{i_s}$, it is impossible that $\deg_{x_k}(w_1^{[{\bf a}]})=\deg_{x_k}(w_2^{[{\bf a}]})$ for each $k\in [n]\setminus \{i_1, \ldots, i_s\}$. Let $t$ be the smallest integer belonging to $[n]\setminus\{i_1, \ldots, i_s\}$ for which $\deg_{x_t}(w_1)\neq \deg_{x_t}(w_2)$. The following cases arise.

\medskip

\noindent
{\bf (Case 1.)} Let $w_1^{[{\bf a}]}\in \{v_1^{[{\bf a}]}, \ldots, v_m^{[{\bf a}]}\}$. By symmetry, assume that $\deg_{x_t}(w_1) < \deg_{x_t}(w_2)$. Then there is a variable $x_r$ dividing $w_1$ but not $w_2$. In particular, $x_r\notin \{x_{i_1}, \ldots, x_{i_s}\}$. It follows from the choice of $t$ that $r>t$. Since $w_1<_{\rm lex} x_tw_1/x_r$ and $I$ is a squarefree lexsegment ideal, one has $x_tw_1/x_r\in I$. This yields that $x_tw_1^{[{\bf a}]}/x_r\in J$, as desired.

\medskip

\noindent
{\bf (Case 2.)} Let $w_1^{[{\bf a}]}\in \{u_1^{[{\bf a}]}, \ldots, u_t^{[{\bf a}]}\}$. First, suppose that $\deg_{x_t}(w_1) > \deg_{x_t}(w_2)$. If there is and integer $r>t$ with $\deg_{x_r}(w_2)>\deg_{x_r}(w_1)$, then $w_2<_{\rm lex}x_tw_2/x_r$. As $I$ is a squarefree lexsegment ideal, we deduce that $x_tw_2/x_r\in I$. This implies that $x_tw_2^{[{\bf a}]}/x_r\in J$, as required. Therefore, suppose that for each $r>t$, we have $\deg_{x_r}(w_2)\leq\deg_{x_r}(w_1)$. This shows that $i_s<t$ and any variable in $[n]\setminus\{i_1, \ldots, i_s\}$ which divides $w_2$, also divides $w_1$. Hence,$$w_2=x_{i_1}\cdots x_{i_s}w_1/w',$$ for some monomial $w'$ dividing $w_1$. We claim that $w_1$ is divisible by $x_{i_{s-1}}$. Indeed, if $w_1$ is not divisible by $x_{i_{s-1}}$, then as $i_s<t$, we have $w_1<_{\rm lex}x_{i_s}w_1/x_t$. Therefore, $x_{i_s}w_1/x_t\in I$ is divisible by $x_{i_s}$ but not by $x_{i_{s-1}}$, contradicting our assumption. This proves our claim. Again, using our assumption, we deduce that $w_1$ is divisible by $x_{i_1}\cdots x_{i_{s-1}}$. Hence, the equality $w_2=x_{i_1}\cdots x_{i_s}w_1/w'$ can be written as $w_2=x_{i_s}w_1/w''$, where $w''=w'/(x_{i_1}\cdots x_{i_{s-1}})$. If $\deg w''=1$, then $w''=x_t$. This yields that $x_tw_2/x_{i_s}=w_1\in I$. Thus, $x_tw_2^{[{\bf a}]}/x_{i_s}\in J$, as desired. If $\deg w''\geq 2$, then $\deg w_2<\deg w_1$ and $w_2/x_{i_s}$ divides $w_1$. In other words, $w_1,w_2\in G(I)$ satisfy conditions (i), (ii) and (iii) of Lemma \ref{1necessary} with $\ell=i_s$. This is a contradiction.

Now, let $\deg_{x_t}(w_1) < \deg_{x_t}(w_2)$. Then there is a variable $x_r$ which divides $w_1$ but not $w_2$. In particular, $x_r\notin \{x_{i_1}, \ldots, x_{i_s}\}$. It follows from the choice of $t$ that $r>t$. Since $w_1<_{\rm lex} x_tw_1/x_r$ and $I$ is a squarefree lexsegment ideal, we have $x_tw_1/x_r\in I$. This yields that $x_tw_1^{[{\bf a}]}/x_r\in J$, as desired.
\hspace{6cm}
\end{proof}

We are now ready to state and prove the main result of this section.

\begin{Theorem} \label{lexsegment}
Let $I$ be a squarefree lexsegment ideal and ${\bf a} = (a_1,\ldots,a_n) \in \ZZ_{>0}^n$.  Then, the ideal $I^{[{\bf a}]}$ is componentwise linear ideal if and only if the assumptions $(\sharp)$ and $(*)$ of Lemma \ref{sufficient} are satisfied.
\end{Theorem}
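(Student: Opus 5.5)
Sufficiency is exactly Lemma \ref{sufficient}, and weakly polymatroidal ideals are componentwise linear. So only necessity needs an argument. Assume $J:=I^{[{\bf a}]}$ is componentwise linear. Lemma \ref{1necessary} gives $(\sharp)$ at once. For $(*)$, let $i_1<\cdots<i_s$ be the indices $k$ with $a_k\geq 2$; the first two bullets then hold by definition. It remains to show that whenever $x_{i_j}$ divides some $u\in G(I)$ with $j\geq 2$, also $x_{i_{j-1}}$ divides $u$. We argue by contradiction: suppose $u\in G(I)$ is divisible by $x_q:=x_{i_j}$ but not by $x_p:=x_{i_{j-1}}$, where $p<q$, $a_p\geq 2$ and $a_q\geq 2$.

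First we reduce. By Theorem \ref{main}, applied repeatedly, we may lower every other exponent to $1$ and $a_p,a_q$ to exactly $2$ while keeping componentwise linearity. So assume ${\bf a}$ has $a_p=a_q=2$ and all other entries $1$. Since $I$ is squarefree lexsegment and $p<q$, the monomial $v:=x_pu/x_q$ satisfies $u<_{\rm lex}v$, hence $v\in I$. If $v\notin G(I)$, some $v/x_r\in I$ with $r\neq p$ (choosing $r\neq p$ is possible unless $v/x_p=u/x_q\in I$, which contradicts $u\in G(I)$). Then $u/x_r\in I$ by the lexsegment property, since $u/x_r<_{\rm lex}v/x_r$ is obtained by swapping $x_p$ for $x_q$. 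This again contradicts minimality of $u$. Hence $v\in G(I)$, and $u':=u^{[{\bf a}]}=x_q^2w$, $v':=v^{[{\bf a}]}=x_p^2w$ with $w=u/x_q$ are generators of the same degree $d+1$, where $d=\deg u$.

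The key step is to show $J_{\langle d+1\rangle}$ is not linearly related, via Lemma \ref{linrel} applied to the pair $(u',v')$. The vertices of the relation graph are the elements of $G(J_{\langle d+1\rangle})$ dividing $\lcm(u',v')=x_p^2x_q^2w$, and any path from $u'$ to $v'$ must pass through a vertex with $x_q$-degree $1$. Let $z$ be such a vertex, with $z\mid x_p^2x_qw$ and $\deg z=d+1$. Then $z$ is a multiple of some generator $g^{[{\bf a}]}$ with $g\in G(I)$ and $g\mid x_px_qw$. Since $\deg_{x_q}z=1<2$, $x_q\nmid g$, so $g\mid x_pw=v$, which forces $g=v$ because both lie in $G(I)$. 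But then $z$ is divisible by $v'=x_p^2w$ of degree $d+1$, so $z=v'$, contradicting $\deg_{x_q}z=1$. Thus no vertex has $x_q$-degree $1$, the graph is disconnected, $J_{\langle d+1\rangle}$ has no linear resolution, and we reach the contradiction.

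The main obstacle is this last computation: one must make sure that vertices of $J_{\langle d+1\rangle}$ that are proper multiples of generators of smaller degree cannot bridge the gap. The divisibility argument above handles them, since it uses only that $z$ is a multiple of some $g^{[{\bf a}]}$, but this step should be checked carefully. The other point needing care is that the reduction to $a_p=a_q=2$ is legitimate; this is exactly what Theorem \ref{main} provides.
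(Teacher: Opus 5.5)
\textbf{Gap in the proof that $v\in G(I)$.} Your proof that $v=x_pu/x_q\in G(I)$ is invalid, and your key step depends on it. From $v/x_r\in I$ and $u/x_r<_{\rm lex}v/x_r$, the lexsegment property tells you nothing. Membership propagates only to lex-\emph{larger} squarefree monomials of the same degree, so you cannot conclude $u/x_r\in I$.

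In fact $v$ need not be a minimal generator of a squarefree lexsegment ideal. Take $I=(x_1x_2,x_1x_3,x_2x_3x_4)$, which is squarefree lexsegment. With $u=x_2x_3x_4\in G(I)$, $p=1$, $q=4$, you get $v=x_1x_2x_3$, which is divisible by $x_1x_2$. Concretely, $v/x_3=x_1x_2\in I$ but $u/x_3=x_2x_4\notin I$.

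Without $v\in G(I)$, your claim that no vertex has $x_q$-degree $1$ fails. For ${\bf a}=(2,1,1,2)$ and the pair $(u',v')=(x_2x_3x_4^2,\,x_1^2x_2x_3)$, the monomial $x_1^2x_2x_4$ is a vertex of the relation graph of $J_{\langle 4\rangle}$ with $x_4$-degree $1$. Here the generator $g=x_1x_2$ divides $x_pw=v$ properly, so the step ``$g\mid v$ forces $g=v$'' breaks down.

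\textbf{Two ways to close it.}
\begin{enumerate}
\item Use $(\sharp)$, which you already have from Lemma \ref{1necessary}. Suppose $g\in G(I)$ properly divides $v$. Then $x_p\mid g$, since otherwise $g\mid u/x_q$, contradicting $u\in G(I)$. Now $g/x_p\mid u/x_q\mid u$ and $\deg g<\deg u$, so $g,u$ satisfy (i)--(iii) of Lemma \ref{1necessary} with $\ell=p$. This is impossible.
\item Track the $x_p$-degree instead of the $x_q$-degree, as the paper does. A path from $u'$ to $v'$ must pass through a vertex $z$ with $\deg_{x_p}z=1$. The generator $g$ with $g^{[{\bf a}]}\mid z$ is then not divisible by $x_p$, so $g\mid x_qw=u$. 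Hence $g=u$ and $z=u'$, a contradiction. This uses only the given minimality of $u$, which is why the paper needs no information about $v$.
\end{enumerate}

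\textbf{On the reduction.} The paper also does without Theorem \ref{main}. It treats arbitrary $a_i,a_j\geq 2$ directly, equalizing degrees by multiplying one of the two monomials by a power of $x_i$ or $x_j$. Your reduction to $a_p=a_q=2$ is legitimate but not needed.
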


\begin{proof}
Set $J:=I^{[{\bf a}]}$. According to Lemma \ref{1necessary} and \ref{sufficient}, we only need to prove that if there are integers $i< j$ with $a_i\geq 2$, $a_j\geq 2$ and a monomial $u\in G(I)$ which is divisible by $x_j$ but not by $x_i$, then $J$ is not componentwise linear. By contradiction, assume that $J$ is componentwise linear. Note that $u<_{\rm lex}x_iu/x_j$. Since $I$ is a squarefree lexsegment ideal, we have $x_iu/x_j\in I$. Set$$v_1:=u^{[{\bf a}]}\in J, \quad v_2:=(x_iu/x_j)^{[{\bf a}]}\in J.$$Let $d_1=\deg v_1$ and $d_2=\deg v_2$. The following cases arise.

\medskip

\noindent
{\bf (Case 1.)} If $d_1\geq d_2$, then set $v_2':=x_i^{d_1-d_2}v_2$ and  $k:=d_1=\deg v_2'$. Since $J$ is componentwise linear, $J_{\langle k\rangle}$ has a linear resolution. In particular, $J_{\langle k\rangle}$ is linearly related. We conclude from Lemma \ref{linrel} that the relation graph $G_{J_{\langle k\rangle}}^{(v_1,v_2')}$ is connected. Hence, there is a monomial $w\in J_{\langle k\rangle}$ in the variables dividing $x_iu$ with $\deg_{x_i}(w)=a_i-1$. It follows from the definition of $J=I^{[{\bf a}]}$ that $w/x_i^{a_i-1}\in J$. Thus, $w/x_i^{a_i-1}$ is divisible by $u^{[{\bf a}]}=v_1$. This yields that$$\deg(w)\geq (a_i-1)+d_1>k,$$where the last inequality follows from $a_i\geq 2$.  This is a contradiction.

\medskip

\noindent
{\bf (Case 2.)} If $d_1<d_2$, then set $v_1':=x_j^{d_2-d_1}v_1$ and  $k:=d_2=\deg v_1'$. As is discussed in Case 1, the relation graph $G_{J_{\langle k\rangle}}^{(v_1',v_2)}$ is connected. Hence, there is a monomial $w\in J_{\langle k\rangle}$ in the variables dividing $x_iu$ with $\deg_{x_i}(w)=a_i-1$. It follows from the definition of $J=I^{[{\bf a}]}$ that $w/x_i^{a_i-1}\in J$. Thus, $w/x_i^{a_i-1}$ is divisible by $u^{[{\bf a}]}=v_1$. This yields that$$\deg(w)\geq (a_i-1)+d_1>(a_i-a_j)+d_1=(d_2-d_1)+d_1=k,$$where the second inequality follows from $a_j\geq 2$.  This is a contradiction.
\, \, \, \,
\end{proof}

Let $2 \leq d < n$.  Recall that the {\em squarefree Veronese ideal} of $S$ of degree $d$ is the ideal $I_n(d)$ of $S$ which is generated by all squarefree monomials of $S$ of degree $d$. The following corollary is an immediate consequence of Theorem \ref{lexsegment}.

\begin{Corollary} \label{veronese}
Given ${\bf a} = (a_1,\ldots,a_n) \in \ZZ_{>0}^n$, the ideal ${I_n(d)}^{[{\bf a}]}$ is componentwise linear if and only if at most one component of ${\bf a}$ is bigger than one.
\end{Corollary}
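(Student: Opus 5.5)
We deduce Corollary~\ref{veronese} from Theorem~\ref{lexsegment}. First note that $I_n(d)$ is a squarefree lexsegment ideal: it contains every squarefree monomial of degree $d$ and is generated in degree $d$, so the lexsegment condition holds trivially. It therefore suffices to check exactly when conditions $(\sharp)$ and $(*)$ of Lemma~\ref{sufficient} hold for $I=I_n(d)$.

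\emph{Condition $(\sharp)$.} Since $G(I_n(d))$ consists of monomials of a single degree $d$, no pair $u,v\in G(I)$ can satisfy (iii) of Lemma~\ref{1necessary}. Hence $(\sharp)$ is automatic for every ${\bf a}$.

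\emph{Condition $(*)$.} The set $\{i_1,\ldots,i_s\}$ is forced to be $\{k : a_k\geq 2\}$, so $(*)$ reduces to the chain condition on these indices.
\begin{itemize}
\item If at most one component of ${\bf a}$ exceeds one, then $s\leq 1$. The divisibility requirement for $2\leq j\leq s$ is vacuous, so $(*)$ holds and Theorem~\ref{lexsegment} shows that $I_n(d)^{[{\bf a}]}$ is componentwise linear.
\item Conversely, suppose $s\geq 2$, and take $i_1<i_2$. Because $1\leq d-1\leq n-2$, we can choose a squarefree monomial $u$ of degree $d$ that is divisible by $x_{i_2}$ but not by $x_{i_1}$: take $x_{i_2}$ together with $d-1$ further variables from the $n-2$ indices outside $\{i_1,i_2\}$. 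Then $u\in G(I_n(d))$ violates the third bullet of $(*)$, so $(*)$ fails and $I_n(d)^{[{\bf a}]}$ is not componentwise linear.
\end{itemize}

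The only point needing care is this counting step. It is where the hypothesis $d<n$ is used, and $d\geq 2$ guarantees $d-1\geq 1$, though a degree-$1$ case would also work. Everything else is a direct application of Theorem~\ref{lexsegment}.
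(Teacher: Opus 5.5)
Your proof is correct and follows the paper's route exactly: the paper presents the corollary as an immediate consequence of Theorem~\ref{lexsegment}. You supply the routine checks it omits, namely that $I_n(d)$ is squarefree lexsegment, that $(\sharp)$ holds vacuously because $G(I_n(d))$ lies in a single degree, and that the index set in $(*)$ is forced to be $\{k : a_k\geq 2\}$. You also correctly use $d<n$ to produce, when $s\geq 2$, a generator divisible by $x_{i_2}$ but not by $x_{i_1}$.
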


\section{Edge ideals} \label{sec3}

Let $G$ be a finite graph having no loop, no multiple edge and no isolated vertex on the vertex set $V(G) = [n] = \{1,\ldots,n\}$ and $E(G)$ the set of edges of $G$.  Let $S=K[x_1, \ldots, x_n]$ denote the polynomial ring in $n$ variables over a field $K$.  The {\em edge ideal} of $G$ is the ideal $I(G)$ of $S$ which is generated by those monomial $x_ix_j$ with $\{i,j\}\in E(G)$.  Fr\"oberg's theorem \cite[Theorem 9.2.3]{HHgtm260} says that $I(G)$ has a linear resolution if and only if $G^c$, the complementary graph \cite[p.~153]{HHgtm260} of $G$, is a chordal graph \cite[p.~155]{HHgtm260}.  Let ${\bf a} = (a_1,\ldots,a_n) \in \ZZ_{>0}^n$ and suppose that $I^{[{\bf a}]}$ is componentwise linear.  It then follows from Corollary \ref{Cor1.3} that $G^c$ is chordal.

A {\em simplicial vertex} of $G$ is a vertex $i$ for which $$N_G(i) := \{ j \in V(G) : \{i,j\} \in E(G) \}$$ is a clique (complete subgraph) of $G$.  We say that $i \in V(G)$ is {\em adjacent} to $j \in V(G)$ if $\{i, j\} \in E(G)$.  An {\em independent set} of $G$ is a subset $\Ac \subset V(G)$ with the property that $\{i,j\} \not\in E(G)$ for all $i,j \in \Ac$ with $i\neq j$.

\begin{Lemma}
    \label{chordalonecomp}
    Let $G$ be a finite connected graph on $[n]$ and suppose that $G^c$ is chordal. If at most one component of ${\bf a}\in \ZZ_{>0}^n$ is bigger than one, then $I(G)^{[{\bf a}]}$ is componentwise linear.
\end{Lemma}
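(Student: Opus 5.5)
Let $k$ be the unique index with $a_k\geq 2$ (if there is none, $I(G)^{[{\bf a}]}=I(G)$ has a linear resolution by Fr\"oberg's theorem and we are done). Relabel so that $k=1$ and write $a:=a_1$. Then
$$J:=I(G)^{[{\bf a}]}=J'+x_1^a\,(x_j : j\in N_G(1)),$$
where $J'=I(G\setminus 1)$ is the edge ideal of the induced subgraph on $\{2,\ldots,n\}$. The plan is to show that $J$ has linear quotients with respect to a degree-increasing order of $G(J)$. Such ideals are componentwise linear by the result of Jahan and Zheng (linear quotients in degree-increasing order imply componentwise linearity).

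First, $(G\setminus 1)^c$ is an induced subgraph of the chordal graph $G^c$, so it is chordal. Hence $J'$ has a linear resolution by Fr\"oberg's theorem. Since $J'$ is quadratic, it therefore has linear quotients for some order $w_1,\ldots,w_t$ of $G(J')$. We list these first; they have degree $2\le a+1$, so the order stays degree-increasing. Next we append $x_1^ax_{j_1},\ldots,x_1^ax_{j_m}$, where $N_G(1)=\{j_1,\ldots,j_m\}$ is ordered as follows: $j_1$ is a simplicial vertex of $G^c[N_G(1)]$, $j_2$ is a simplicial vertex of $G^c[N_G(1)\setminus\{j_1\}]$, and so on. Such an order exists since induced subgraphs of chordal graphs are chordal and chordal graphs have simplicial vertices. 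With this order, for every $r$ the $G^c$-neighbours of $j_r$ among $j_{r+1},\ldots,j_m$ form a clique of $G^c$.

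Since $x_1$ does not occur in $J'$, the colon ideal is
$$\bigl(J'+x_1^a(x_{j_1},\ldots,x_{j_{r-1}})\bigr):x_1^ax_{j_r}=(J':x_{j_r})+(x_{j_1},\ldots,x_{j_{r-1}}).$$
Here $J':x_{j_r}=(x_i : i\in N_{G\setminus 1}(j_r))+(x_px_q : \{p,q\}\in E(G\setminus 1),\ p,q\notin N_G[j_r])$. The key step, and the main obstacle, is to show that every such quadratic generator $x_px_q$ is redundant modulo $(x_i : i\in N_G(j_r)\setminus\{1\})+(x_{j_1},\ldots,x_{j_{r-1}})$; it suffices that $p$ or $q$ lies in $\{j_1,\ldots,j_{r-1}\}$.

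\emph{First}, not both $p,q$ lie outside $N_G(1)$. Otherwise, in $G^c$ the vertices $1,p,j_r,q$ satisfy: $1$ and $j_r$ are each adjacent to both $p$ and $q$, while $p$ and $q$ are nonadjacent (since $\{p,q\}\in E(G)$) and $1$ and $j_r$ are nonadjacent (since $j_r\in N_G(1)$). This is a chordless $4$-cycle $1,p,j_r,q$ in $G^c$, contradicting chordality.

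\emph{Second}, we must exclude that every vertex of $\{p,q\}\cap N_G(1)$ lies in $\{j_{r+1},\ldots,j_m\}$. If both $p,q\in N_G(1)$ were later, they would be $G^c$-neighbours of $j_r$ among the later vertices (because $p,q\notin N_G(j_r)$), yet nonadjacent in $G^c$. This contradicts the clique property of the chosen order. The remaining subcase, where exactly one of $p,q$ lies in $N_G(1)$ and it is later than $j_r$, is where I expect the real work. For it I would try to refine the elimination order by using chordality of all of $G^c$, not only of $G^c[N_G(1)]$: eliminate simplicial vertices of $G^c[N_G(1)\cup\{\text{non-neighbours}\}]$ so that the same chordless-$4$-cycle argument, now applied to $1,p,j_r,q$ with one vertex in $N_G(1)$, yields a contradiction.

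Once the colon ideals are shown to be generated by variables, $J$ has linear quotients in a degree-increasing order. By Jahan--Zheng, $J$ is then componentwise linear, which proves the lemma.
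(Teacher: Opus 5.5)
There is a genuine gap, and it is exactly the subcase you left open. With your order on $N_G(1)$, a perfect elimination ordering of $G^c[N_G(1)]$ alone, the colon ideals need not be generated by variables.

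Here is a counterexample. Let $G$ be the path on $1,j,p,q$ with edges $\{1,j\},\{1,p\},\{p,q\}$. Then $G^c$ is the path with edges $\{1,q\},\{q,j\},\{j,p\}$, which is chordal, and $N_G(1)=\{j,p\}$. Both endpoints of the single edge $G^c[N_G(1)]$ are simplicial, so your rule allows $j_1=j$, $j_2=p$. But $J'=(x_px_q)$, and $(x_px_q):x_1^ax_j=(x_px_q)$ is not generated by variables. This is precisely your unresolved case: $p=j_2$ comes later than $j_1$, and $q\notin N_G(1)$.

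Your proposed refinement does not repair this. The set $N_G(1)$ together with the non-neighbours of $1$ is just $[n]\setminus\{1\}$, and $q,j,p$ is a perfect elimination ordering of $G^c\setminus 1$ that again puts $j$ before $p$.

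The missing idea is that vertex $1$ must itself take part in the elimination and be eliminated last. Choose a perfect elimination ordering of all of $G^c$ that ends at $1$, and order $N_G(1)$ by restriction. Such an ordering exists because a chordal graph on at least two vertices always has a simplicial vertex other than any prescribed one: trivially if it is complete, and otherwise because it has two nonadjacent simplicial vertices.

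With this order your two settled cases still hold, and the open one closes. Let $p=j_s$ with $s>r$, $q\notin N_G[1]$, and $p,q\notin N_G[j_r]$.
\begin{itemize}
\item If $q$ comes after $j_r$, then $p$ and $q$ are both later $G^c$-neighbours of $j_r$. Hence $\{p,q\}\in E(G^c)$, contradicting $\{p,q\}\in E(G)$.
\item If $q$ comes before $j_r$, then $j_r$ and $1$ are both later $G^c$-neighbours of $q$, using $q\notin N_G(1)$ and that $1$ is last. Hence $\{1,j_r\}\in E(G^c)$, contradicting $j_r\in N_G(1)$.
\end{itemize}

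This is exactly the paper's argument, where the special vertex is relabelled $n$ and placed last in a perfect elimination ordering of $G^c$. With this choice of order, the rest of your outline goes through: listing $G(J')$ first, the colon formula, and the degree-increasing linear-quotients criterion.
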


\begin{proof}
Set $I:=I(G)$.  Let ${\bf a}=(a_1, \ldots, a_n)\in \ZZ_{>0}^n$. If $a_i=1$ for each $i\in [n]$, then $I^{[{\bf a}]}=I$ has a linear resolution. So, suppose that exactly one component of ${\bf a}$, say $a_r$ is bigger than one.  We show that $I^{[{\bf a}]}$ has linear quotients (see \cite[Theorem 8.2.15]{HHgtm260}). 
Since $G^c$ is chordal, by relabeling the vertices of $G$, one may assume that each $i\in [n]$ is a simplicial vertex of $G^c\setminus\{1, \ldots, i-1\}$ \cite[Theorem 9.2.12]{HHgtm260}.  Furthermore, since a chordal graph on $n \geq 2$ vertices has at has at least two simplicial vertices, one may assume that $r=n$.  Set $N_G(n)=\{i_1, \ldots, i_k\}$, where $i_1<i_2<\cdots < i_k$. Let $u_1, \ldots, u_t$ denote those elements of $G(I)$ which are not divisible by $x_n$ and set $\{v_1, \ldots, v_k\}=G(I)\setminus\{u_1, \ldots, u_t\}$.  One may assume that $v_{\ell}=x_{i_{\ell}}x_n$ for each $\ell=1, \ldots, k$. Note that $u_i^{[{\bf a}]}=u_i$ for each $i=1, \ldots, t$. Since $(u_1, \ldots, u_t)=I(G\setminus n)$ and since $(G\setminus n)^c$ is chordal, (by relabeling $u_1, \ldots, u_t$, if necessary), one may assume that for each $j=2,\ldots, t$,  the colon ideal $$(u_1^{[{\bf a}]}, \ldots, u_{j-1}^{[{\bf a}]}):u_j^{[{\bf a}]}=(u_1, \ldots, u_{j-1}):u_j$$is generated by a subset of variables. Therefore, we only need to show that
\begin{eqnarray}
    \label{aaaaa}
(u_1, \ldots, u_t, v_1^{[{\bf a}]}, \ldots v_{\ell-1}^{[{\bf a}]}):v_{\ell}^{[{\bf a}]}
\end{eqnarray}
is generated by a subset of variables. Obviously, $(v_j^{[{\bf a}]}:v_{\ell}^{[{\bf a}]})=x_{i_j}$ for each $j=1, \ldots, \ell-1$. So, it is enough to show that for each $i=1, \ldots, t$, the monomial $\frac{u_i}{{\rm gcd}\big(u_i,v_{\ell}^{[{\bf a}]}\big)}$ is divisible by a variable belonging to (\ref{aaaaa}).
Let $u_i=x_px_q$ for distinct integers $p,q\in [n-1]$. If $x_{i_{\ell}}\in \{x_p, x_q\}$, then $\frac{u_i}{{\rm gcd}\big(u_i,v_{\ell}^{[{\bf a}]}\big)}$ is a variable.  So, suppose that $x_{i_{\ell}}\notin \{x_p, x_q\}$. We must show that at least one of $x_p,x_q$ belongs to 
(\ref{aaaaa}).  It follows from $(v_j^{[{\bf a}]}:v_{\ell}^{[{\bf a}]})=x_{i_j}$ that$$x_{i_1}, \ldots, x_{i_{\ell-1}}\in (u_1, \ldots, u_t, v_1^{[{\bf a}]}, \ldots v_{\ell-1}^{[{\bf a}]}):v_{\ell}^{[{\bf a}]}.$$

Consequently, if either $x_p$ or $x_q$ belongs to $\{x_{i_1}, \ldots, x_{i_{\ell-1}}\}$, we are done.  Suppose that neither $x_p$ nor $x_q$ belongs to $\{x_{i_1}, \ldots, x_{i_{\ell-1}}\}$. Since $G^c$ has no $4$-cycle, at least one of $x_px_n, x_qx_n,x_px_{i_{\ell}}, x_qx_{i_{\ell}}$ belongs to $G(I)$.  The following cases arise.

\medskip

\noindent
{\bf (Case 1.)} Suppose that at least one of $x_px_n$ and $x_qx_n$, say, $x_px_n$ belongs to $G(I)$. Since $x_p\in N_G(n)$, there is $m\in [k]$ with $p=i_m$. Since $x_p\notin\{x_{i_1}, \ldots, x_{i_{\ell-1}},x_{i_{\ell}}\}$,  $p=i_m>i_{\ell}$. If $x_qx_n\in G(I)$, then a similar argument shows that $q>i_{\ell}$. Thus, $p,q,i_{\ell}\in V(G\setminus\{1, 2, \cdots, i_{\ell}-1\})$. Since $i_{\ell}$ is a simplicial vertex of $G^c\setminus\{1, 2, \cdots, i_{\ell}-1\}$, we deduce that either $x_{i_{\ell}}x_p\in G(I)$ or $x_{i_{\ell}}x_q\in G(I)$. If $x_{i_{\ell}}x_p\in G(I)$, then one has
$$x_p\in (u_1, \ldots, u_t, v_1^{[{\bf a}]}, \ldots v_{\ell-1}^{[{\bf a}]}):v_{\ell}^{[{\bf a}]}.$$  If $x_{i_{\ell}}x_q\in G(I)$, then one has
$$x_q\in (u_1, \ldots, u_t, v_1^{[{\bf a}]}, \ldots v_{\ell-1}^{[{\bf a}]}):v_{\ell}^{[{\bf a}]}.$$ Therefore, suppose that $q\notin N_G(n)$. 
If $q>i_{\ell}$, the same argument as above implies the assertion. Let $q<i_{\ell}.$ Note that $q$ is a simplicial vertex of $G^c\setminus\{1, \ldots, q-1\}$. Hence, either $x_qx_n\in G(I)$ or $x_qx_{i_{\ell}}\in G(I)$. However, since $q\notin N_G(n)$, it follows that $x_qx_{i_{\ell}}\in G(I)$, which yields that$$x_q\in (u_1, \ldots, u_t, v_1^{[{\bf a}]}, \ldots v_{\ell-1}^{[{\bf a}]}):v_{\ell}^{[{\bf a}]},$$as desired. 

\medskip

\noindent
{\bf (Case 2.)} Suppose that neither $x_px_n$ nor $x_qx_n$ belongs to $G(I)$.  Therefore, at least one of $x_px_{i_{\ell}}$ and  $x_qx_{i_{\ell}}$, say, $x_px_{i_{\ell}}$ belongs to $G(I)$. It immediately follows that$$x_p\in (u_1, \ldots, u_t, v_1^{[{\bf a}]}, \ldots v_{\ell-1}^{[{\bf a}]}):v_{\ell}^{[{\bf a}]}.$$This completes the proof.
\hspace{9.2cm}
\end{proof}

\begin{Lemma}
    \label{chordaltwocomp1}
    Let $G$ be a finite connected graph on $[n]$ and suppose that $G^c$ is chordal. Given ${\bf a}=(a_1, \ldots, a_n)\in \ZZ_{>0}^n$, set $\mathcal{A}:=\{i\in [n] : a_i\geq 2\}$. If $\mathcal{A}$ is an indepent set of $G$ and if $N_G(i)\cap N_G(j)=\emptyset$ for any distinct integers $i,j\in \mathcal{A}$, then $I(G)^{[{\bf a}]}$ is componentwise linear.
\end{Lemma}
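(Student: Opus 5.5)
The plan is to show that $I(G)^{[{\bf a}]}$ has linear quotients with respect to an order of its minimal generators in which degrees are non-decreasing. A monomial ideal with linear quotients in such an order is componentwise linear (this is the standard criterion underlying \cite[Theorem 8.2.15]{HHgtm260}). Set $I:=I(G)$ and write $\mathcal{A}=\{c_1,\ldots,c_s\}$ with $a_{c_1}\leq\cdots\leq a_{c_s}$.

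Because $\mathcal{A}$ is independent, $G(I^{[{\bf a}]})$ splits into two kinds of generators:
\begin{itemize}
\item the quadrics $u_1,\ldots,u_t$, i.e.\ the edges of $G$ not meeting $\mathcal{A}$;
\item for each $c_m$, the monomials $x_{c_m}^{a_{c_m}}x_j$ with $j\in N_G(c_m)$, of degree $a_{c_m}+1\geq 3$.
\end{itemize}
By hypothesis each $j\notin\mathcal{A}$ lies in $N_G(c_m)$ for at most one $m$. I order the generators as follows: first $u_1,\ldots,u_t$, then the generators attached to $c_1$, then those attached to $c_2$, and so on. This order is degree-increasing.

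\textbf{The quadrics.} The $u$'s generate the edge ideal of the induced subgraph $G\setminus\mathcal{A}$, possibly with isolated vertices. Its complement is an induced subgraph of the chordal graph $G^c$, hence chordal. So by \cite[Theorem 9.2.12]{HHgtm260} and Fröberg's theorem the $u$'s can be ordered to have linear quotients.

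\textbf{The generators attached to a fixed $c=c_m$.} I order $x_c^{a_c}x_j$, $j\in N_G(c)$, exactly as in the proof of Lemma \ref{chordalonecomp}, using a perfect elimination ordering of $G^c$. Now consider the colon of the earlier ideal by $v=x_c^{a_c}x_{j}$. It contains $x_{j'}$ for every earlier $j'\in N_G(c)$. The contribution of each $u_i=x_px_q$ is handled word for word as in Cases 1 and 2 of Lemma \ref{chordalonecomp}: only the vertices $c,j,p,q$ and the absence of induced $4$-cycles in $G^c$ are used there, and $p,q\notin\mathcal{A}$.

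\textbf{The new point: cross terms.} Take an earlier generator $w=x_{c'}^{a_{c'}}x_{j'}$ with $c'=c_{m'}$, $m'<m$, and $j'\in N_G(c')$. Then $w:v=x_{c'}^{a_{c'}}x_{j'}$, because $j'\neq j$ and $j'\neq c$. Here both hypotheses enter:
\begin{itemize}
\item $\{c,c'\}\notin E(G)$ since $\mathcal{A}$ is independent;
\item $\{c,j'\}\notin E(G)$ and $\{c',j\}\notin E(G)$ since $N_G(c)\cap N_G(c')=\emptyset$.
\end{itemize}
So $j'\!-\!c\!-\!c'\!-\!j$ is a path in $G^c$, and its chords $\{c,j\}$ and $\{c',j'\}$ are edges of $G$. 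If $\{j,j'\}\notin E(G)$, these four vertices would form a chordless $4$-cycle in $G^c$, which is impossible. Hence $x_jx_{j'}$ is one of the $u_i$. Then $x_{j'}=u_i:v$ lies in the colon and divides $w:v$, so this generator is redundant in the colon.

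Consequently every colon ideal is generated by variables, which gives linear quotients in a degree-increasing order and hence componentwise linearity.

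I expect the main obstacle to be checking that the case analysis of Lemma \ref{chordalonecomp} really transfers when $c$ is not the last vertex of the elimination ordering. In that lemma one could relabel so that $r=n$; here several vertices carry $a_i\geq2$, so this relabeling is not available for all of them. If it fails, I would try first to argue directly from the absence of induced $4$-cycles and the simplicial-vertex property of $G^c\setminus\{1,\ldots,i-1\}$ restricted to $\{c\}\cup N_G(c)\cup(V(G)\setminus\mathcal{A})$. Failing that, I would order $N_G(c)$ by a perfect elimination ordering of the complement of the induced subgraph on $(V(G)\setminus\mathcal{A})\cup\{c\}$, in which $c$ is simplicial.
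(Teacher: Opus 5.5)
Your architecture matches the paper's. You use linear quotients, listing the edges of $G\setminus\mathcal{A}$ first and then one block per $c\in\mathcal{A}$. Your cross-term argument (the path $j',c,c',j$ in $G^c$ forcing $\{j,j'\}\in E(G)$) is exactly the paper's opening claim together with its Step 2. Ordering the blocks by increasing $a_c$ is a harmless refinement that the paper leaves implicit.

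The gap is the within-block step as you state it. It is not true that Cases 1 and 2 of Lemma \ref{chordalonecomp} use only the four vertices and the absence of induced $4$-cycles. Look at the subcase of Case 1 with $q\notin N_G(c)$ and $q<i_\ell$. There the proof uses that $q$ is simplicial in $G^c\setminus\{1,\ldots,q-1\}$ \emph{and} that the distinguished vertex comes after $q$ in the ordering. Only then are $c$ and $i_\ell$ both $G^c$-neighbours of $q$ in that graph, hence $G^c$-adjacent to each other, which gives the contradiction.

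For an arbitrary perfect elimination ordering of $G^c$ this breaks down. Take $G$ with edges $\{q,p\},\{p,c\},\{c,i\}$ and $\mathcal{A}=\{c\}$. Then $G^c$ is the path with edges $\{c,q\},\{q,i\},\{i,p\}$, and $c,q,i,p$ is a perfect elimination ordering of it. This ordering puts $i$ before $p$ in $N_G(c)$, so your generator order is $x_px_q,\ x_c^{a_c}x_i,\ x_c^{a_c}x_p$. But $(x_px_q):x_c^{a_c}x_i=(x_px_q)$ is not generated by variables.

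What is needed is that $c$ be eliminated \emph{last}, after every vertex outside $\mathcal{A}$. It is not enough that $c$ be simplicial: in the example $c$ is simplicial in $G^c$, indeed first in the ordering, and the order still fails. So the condition in your last fallback has to be corrected accordingly.

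There are two ways to repair this step:
\begin{itemize}
\item Follow the paper's Step 1: apply Lemma \ref{chordalonecomp} to $H_c=G\setminus(\mathcal{A}\setminus\{c\})$, whose proof uses a perfect elimination ordering of $H_c^c$ ending at $c$.
\item Take a single perfect elimination ordering of $G^c$ in which all of $\mathcal{A}$ comes last. This exists because $\mathcal{A}$ is a clique of $G^c$. A non-complete chordal graph has two non-adjacent simplicial vertices, one of which lies outside any given clique, and every vertex of a complete graph is simplicial.
\end{itemize}
With either choice every $q\notin\mathcal{A}$ precedes $c$. The case analysis of Lemma \ref{chordalonecomp} then goes through verbatim for each block, and the rest of your argument completes the proof.
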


\begin{proof}
Set $I:=I(G)$. We show that $I^{[{\bf a}]}$ has linear quotients. We claim that for any pair of distinct integers $i,j\in \mathcal{A}$, any vertex in $N_G(i)$ is adjacent to any vertex in $N_G(i)$. Indeed, assume that $r\in N_G(i)$ and $s\in N_G(j)$. It follows from  $N_G(i)\cap N_G(j)=\emptyset$ that $\{j,r\}, \{i,s\}\notin E(G)$. In particular, $r\neq s$. If $\{r,s\}\notin E(G)$, then the induced subgraph of $G^c$ on $\{i,j, r, s\}$ is a $4$-cycle. This is a contradiction, as $G^c$ is a chordal graph. Therefore, our claim follows.

Let $u_1, \ldots, u_t$ denote those elements of $G(I)$ which are not divisible by any $x_i$ with $i\in \mathcal{A}$. Furthermore, for each $i\in \mathcal{A}$, Let $\mathcal{G}_i$ denote those elements of $G(I)$ which are divisible by $x_i$. Since $\mathcal{A}$ is an independent set of $G$, we conclude that $\mathcal{G}_i$'s are disjoint. Note that$$G(I)=\{u_1, \ldots, u_t\}\sqcup\bigsqcup_{i\in \mathcal{A}}\mathcal{G}_i.$$To simplify the notation, set $\mathcal{G}_i^{[{\bf a}]}=\{v^{[{\bf a}]} : v\in \mathcal{G}_i\}.$

\medskip

\noindent
{\bf (Step 1.)} For a fixed integer $i\in \mathcal{A}$, let ${\bf a_i}$ be the vectors obtained from ${\bf a}$ by deleting all components $a_k$ with $k\in \mathcal{A}\setminus\{i\}$. Set $H_i:=G\setminus (\mathcal{A}\setminus \{i\})$. Note that $I(H_i)$ is the ideal generated by $\{u_1, \ldots, u_t\}\sqcup\mathcal{G}_i$. Since $H_i^c$ is a chordal graph and exactly one component of ${\bf a_i}$ is  bigger than one, it follows from Lemma \ref{chordalonecomp} that the ideal generated by  $\{u_1^{[{\bf a}]}, \ldots, u_t^{[{\bf a}]}\}\sqcup \mathcal{G}_i^{[{\bf a}]}$ has linear quotients.

\medskip

\noindent
{\bf (Step 2.)} In view of Step 1, it is enough to show that for any two monomial $w_1\in \mathcal{G}_i$ and $w_2\in \mathcal{G}_j$ with $i\neq j$, the monomial $(w_1^{[{\bf a}]}:w_2^{[{\bf a}]})$ is divisible by a variable belonging to $(u_1^{[{\bf a}]}, \ldots, u_t^{[{\bf a}]}):w_2^{[{\bf a}]}$.  Let $w_1=x_ix_p$ and $w_2=x_jx_q$, where $p,q\in [n]$. It follows from the first paragraph of the proof that $\{p,q\}\in E(G)$. In addition, $p,q\notin \mathcal{A}$, as $\mathcal{A}$ is an independent set of $G$. Therefore, $x_px_q\in\{u_1, \ldots, u_t\}$. As a consequence,$$x_p\in(u_1, \ldots, u_t): w_2^{[{\bf a}]}=(u_1^{[{\bf a}]}, \ldots, u_t^{[{\bf a}]}):w_2^{[{\bf a}]}.$$Since $(w_1^{[{\bf a}]}:w_2^{[{\bf a}]})=x_i^{a_i}x_p$ is divisible by $x_p$, the assertion follows.
\hspace{2cm}
\end{proof}

\begin{Example}
    {\em
  Let $G$ be the finite graph on $[6]$ which is obtained by adding edges $\{1,4\}, \{2,5\}, \{3,6\}$ to the triangle on $[3]$.  Its complementary graph is chordal.
  For example,
  $$I(G)^{[(1,1,1,2,3,4)]}=(x_1x_2,x_1x_3,x_2x_3,x_1x_4^2,x_2x_5^3,x_3x_6^4)$$
  is componentwise linear.
  }
\end{Example}

\begin{Lemma}
    \label{chordaltwocomp2}
    Let $G$ be a finite connected graph on $[n]$ and suppose that $G^c$ is chordal. Assume that exactly two components $a_i$ and $a_j$ of ${\bf a}=(a_1, \ldots, a_n)\in \ZZ_{>0}^n$ are bigger than one.  Furthermore, suppose that (i) $\{i,j\}\in E(G)$, (ii) $N_G(i)\cap N_G(j)=\emptyset$, and (iii) every vertex belonging to $N_G(i)$ is adjacent to every vertex belonging to $N_G(j)$.  Then $I(G)^{[{\bf a}]}$ is componentwise linear.
\end{Lemma}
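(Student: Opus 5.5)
We will show that $I(G)^{[{\bf a}]}$ has linear quotients, following the pattern of Lemma \ref{chordaltwocomp1}. Set $I:=I(G)$. Let $u_1,\ldots,u_t$ be the elements of $G(I)$ divisible by neither $x_i$ nor $x_j$. Let $\mathcal{G}_i$ be the generators $x_ix_p$ with $p\in N_G(i)\setminus\{j\}$, and let $\mathcal{G}_j$ be the generators $x_jx_q$ with $q\in N_G(j)\setminus\{i\}$. Then
$$G(I)=\{u_1,\ldots,u_t\}\sqcup\mathcal{G}_i\sqcup\mathcal{G}_j\sqcup\{x_ix_j\}.$$
The proposed order is: first $u_1,\ldots,u_t$; then $(x_ix_j)^{[{\bf a}]}=x_i^{a_i}x_j^{a_j}$; then the elements of $\mathcal{G}_i^{[{\bf a}]}$; then those of $\mathcal{G}_j^{[{\bf a}]}$. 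If placing $x_i^{a_i}x_j^{a_j}$ second causes trouble, I would instead move it to the end and check both orders.

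\textbf{Step 1.} By (ii), no $p\in N_G(i)\setminus\{j\}$ lies in $N_G(j)$. The graph $H_i:=G\setminus j$ has $H_i^c$ chordal (an induced subgraph of a chordal graph), and ${\bf a}$ restricted to it has exactly one entry bigger than one. So Lemma \ref{chordalonecomp}, or rather its proof, gives linear quotients for $\{u_1,\ldots,u_t\}\sqcup\mathcal{G}_i^{[{\bf a}]}$, with the $u$'s first in the order produced there. The same holds symmetrically for $j$.

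\textbf{Step 2.} I then check the colon ideals where the new interactions occur.

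\emph{(a) Colon by $x_i^{a_i}x_j^{a_j}$ against the $u$'s.} Take $u=x_px_q$ with $p,q\notin\{i,j\}$. Since $G^c$ has no induced $4$-cycle on $\{i,j,p,q\}$ and $\{i,j\}$, $\{p,q\}$ are edges of $G$, some edge joins $\{p,q\}$ to $\{i,j\}$; say $\{p,i\}\in E(G)$. We need the variable $x_p$ in the colon ideal, i.e.\ $x_px_i^{a_i}x_j^{a_j}\in(u_1,\ldots,u_t)$. This fails, because $x_px_i$ is not among the $u$'s. This is why the ordering matters: with the order above, $u:x_i^{a_i}x_j^{a_j}=x_px_q$ must be handled by something else. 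I therefore expect to place $x_i^{a_i}x_j^{a_j}$ last, or at least after $\mathcal{G}_i$ and $\mathcal{G}_j$. Then $x_ix_p\in\mathcal{G}_i$ comes earlier, and $x_p\in(\ldots):x_i^{a_i}x_j^{a_j}$ because $x_i^{a_i}x_p\mid x_px_i^{a_i}x_j^{a_j}$.

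\emph{(b) Colon by $x_i^{a_i}x_j^{a_j}$ against $\mathcal{G}_i$ and $\mathcal{G}_j$.} For $x_i^{a_i}x_p\in\mathcal{G}_i^{[{\bf a}]}$ the colon is $x_p$ itself, and similarly for $\mathcal{G}_j$. So with $x_i^{a_i}x_j^{a_j}$ last, its colon ideal is generated by the $x_p$ ($p\in N_G(i)\setminus\{j\}$), the $x_q$ ($q\in N_G(j)\setminus\{i\}$), and the contributions from the $u$'s, which are covered by (a).

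\emph{(c) Colon by $w_2=x_j^{a_j}x_q\in\mathcal{G}_j^{[{\bf a}]}$ against $w_1=x_i^{a_i}x_p\in\mathcal{G}_i^{[{\bf a}]}$.} The colon is $x_i^{a_i}x_p$. By (iii), $\{p,q\}\in E(G)$, and $p,q\notin\{i,j\}$. Hence $x_px_q$ is some $u_s$, so $x_p$ lies in the colon ideal, exactly as in Step 2 of Lemma \ref{chordaltwocomp1}.

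The main obstacle is making the order and Step 1 compatible. The order $u$'s, then $\mathcal{G}_i$, then $\mathcal{G}_j$, then $x_ix_j$ is clean except for one point: Step 1's linear quotients for $\mathcal{G}_j$ rely on Lemma \ref{chordalonecomp}'s internal ordering of $N_G(j)$, which is induced by a perfect elimination order of $G^c$ with $j$ last. This must still hold after the $\mathcal{G}_i$ are inserted before them. Extra generators only enlarge colon ideals, but the minimal generators of the colon must remain variables, and (c) shows the $\mathcal{G}_i$ contribute only the variables $x_p$. So this should go through.
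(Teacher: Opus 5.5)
Your proposal is correct and is essentially the paper's proof. The paper uses the same linear-quotient order: the $u$'s, then $\mathcal{G}_i^{[{\bf a}]}$, then $\mathcal{G}_j^{[{\bf a}]}$, with $x_i^{a_i}x_j^{a_j}$ last. It also uses the same induced-$4$-cycle argument to show that the final colon ideal is generated by the variables $x_p$ with $p\in (N_G(i)\cup N_G(j))\setminus\{i,j\}$.

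The only difference is organizational. The paper deletes the edge $\{i,j\}$ to get $H$, uses (ii) and (iii) to show $H^c$ is still chordal, and then cites Lemma~\ref{chordaltwocomp1} for $I(H)^{[{\bf a}]}$. You skip that chordality check by applying Lemma~\ref{chordalonecomp} to $G\setminus j$ and $G\setminus i$, and by using (iii) directly for the colon ideals between the $\mathcal{G}_i$ and $\mathcal{G}_j$ blocks.
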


\begin{proof}
One may assume that $i=1$ and $j=2$. We show that $I(G)^{[{\bf a}]}$ has linear quotients. Let $H$ be the finite graph obtained from $G$ by deleting the edge $\{1,2\}$. We claim that $H^c$ is a chordal graph. Let $C$ be an induced cycle of $H^c$ of length at least $4$. Since $G^c$ is a chordal graph, we deduce that $\{1,2\}$ is an edge of $C$. So, one may assume that $V(C)=[p]$, where $p\leq n$ and$$E(C)=\big\{\{1,2\}, \{2,3\}, \cdots, \{p-1,p\}, \{1,p\}\big\}.$$If $p=4$, then it follows from $\{1,3\},\{2,4\}\notin E(C)$ and $\{3,4\}\in E(C)$ that $\{1,3\},\{2,4\}\in E(G)$ and $\{3,4\}\notin E(G)$, contradicting (iii). Let $p\geq 5$. We deduce from $\{1,4\},\{2,4\}\notin E(C)$ that $\{1,4\},\{2,4\}\in E(G)$, contradicting (ii). Therefore, $H^c$ is a chordal graph. Set $f=x_1x_2\in I(G)$. Then $I(G)=I(H)+(f)$. It follows from the proof of Lemma \ref{chordaltwocomp1} that $I(H)^{[{\bf a}]}$ has linear quotients. Thus, it is enough to show that $I(H)^{[{\bf a}]}:f^{[{\bf a}]}$ is generated by a subset of variables. Set $N_H(1)=\{i_1, \ldots, i_k\}$ and $N_H(2)=\{j_1, \ldots, j_m\}$. Let $u_1, \ldots, u_t$ be those monomials belonging to $G(I(H))$ which are not divisible by $x_1$ and $x_2$. Also, set $v_r:=x_1x_{i_r}$, for $r=1, \ldots, k$ and $w_s:=x_2x_{j_s}$, for $s=1, \ldots, m$. Therefore,$$I(H)=(u_1, \ldots, u_t, v_1, \ldots, v_k, w_1, \ldots, w_m).$$Note that for each $r=1, \ldots, k$, one has $(v_r^{[{\bf a}]}:f^{[{\bf a}]})=(x_1^{a_1}x_{i_r} : x_1^{a_1}x_2^{a_2})=x_{i_r}$. Thus, $x_{i_r}\in I(H)^{[{\bf a}]}:f^{[{\bf a}]}$, for each $r=1, \ldots, k$. Similarly, $x_{j_s}\in I(H)^{[{\bf a}]}:f^{[{\bf a}]}$ for each $s=1, \ldots, m$. In other words,$$(x_{i_1}, \ldots, x_{i_k}, x_{j_1}, \ldots, x_{j_m})\subseteq I(H)^{[{\bf a}]}:f^{[{\bf a}]}.$$We show that the reverse inclusion holds as well. It is enough to prove that for each $\ell=1, \ldots, t$, the monomial $u_{\ell}=(u_{\ell}:f^{[{\bf a}]})=(u_{\ell}^{[{\bf a}]}: f^{[{\bf a}]})$ is divisible by one of $x_{i_1}, \ldots, x_{i_k}, x_{j_1}, \ldots, x_{j_m}$. Let $u_{\ell}=x_qx_{q'}$, where $q,q'\in [n]\setminus \{1,2\}$ with $q \neq q'$. As $\{1,2\}, \{q,q'\}\in E(G)$ and $G^c$ has no induced $4$-cycles, at least one of $\{1,q\},\{1,q'\},\{2,q\}$ and $\{2, q'\}$ is an edge of $G$. By symmetry, assume that $\{1,q\}\in E(G)$. This yields that $q\in N_G(1)=\{i_1, \ldots, i_k\}$.  Let $q=i_h$, where $h\in [k]$. Then $u_{\ell}=x_{i_h}x_{q'}$ is divisible by $x_{i_h}$, as desired.
\hspace{5.2cm}
\end{proof}

\begin{Example}
    {\em
 Let $K_{n,m}$ denote the complete bipartite graph on the vertex set $[n+m]=[n] \sqcup ([n+m]\setminus [n])$.  Let ${\bf a}=(a_1, \ldots, a_{n+m})\in \ZZ_{>0}^{n+m}$ and suppose that at most one of $a_1, \ldots, a_n$ is bigger than $1$ and that at most one of $a_{n+1}, \ldots, a_{n+m}$ is bigger than $1$.  Then $I(G)^{[{\bf a}]}$ is componentwise linear.
    }
\end{Example}

\begin{Lemma}
    \label{chordaltwocompnot1}
    Let $G$ be a finite connected graph on $[n]$ and suppose that $G^c$ is chordal. Let ${\bf a}=(a_1, \ldots, a_n)\in \ZZ_{>0}^n$. If there are $i,j\in [n]$ with $i \neq j$ for which $a_i \geq 2, a_j\geq 2$ and $N_G(i)\cap N_G(j)\neq \emptyset$, then  $I(G)^{[{\bf a}]}$ is not componentwise linear.
\end{Lemma}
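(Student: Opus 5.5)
The plan is to imitate the relation-graph arguments in the proofs of Lemma \ref{1necessary} and Theorem \ref{lexsegment}. Set $J:=I(G)^{[{\bf a}]}$ and pick $k\in N_G(i)\cap N_G(j)$. Then $x_ix_k$ and $x_jx_k$ belong to $G(I(G))$, so
$$v_1:=x_i^{a_i}x_k^{a_k},\qquad v_2:=x_j^{a_j}x_k^{a_k}$$
belong to $G(J)$. By the symmetry between $i$ and $j$ we may assume $a_i\geq a_j$, so that $d:=\deg v_1=a_i+a_k\geq \deg v_2$. Suppose, to the contrary, that $J$ is componentwise linear. We will derive a contradiction from the linear relatedness of $J_{\langle d\rangle}$.

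First we pad $v_2$ to degree $d$ by putting $v_2':=x_j^{a_i-a_j}v_2=x_j^{a_i}x_k^{a_k}$. Then $v_1,v_2'\in G(J_{\langle d\rangle})$, and $\lcm(v_1,v_2')=x_i^{a_i}x_j^{a_i}x_k^{a_k}$. Since $J_{\langle d\rangle}$ has a linear resolution, it is linearly related, so Lemma \ref{linrel} says that the relation graph $G^{(v_1,v_2')}_{J_{\langle d\rangle}}$ is connected. Two vertices $w,w'$ of this graph are adjacent exactly when $w'=x_sw/x_t$ for some variables $x_s,x_t$. Hence the $x_i$-degree changes by at most one along each edge. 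A path from $v_1$, which has $x_i$-degree $a_i$, to $v_2'$, which has $x_i$-degree $0$, therefore passes through a vertex $w$ with $\deg_{x_i}(w)=a_i-1$. This $w$ is a monomial of degree $d$ in $J$ whose support lies in $\{x_i,x_j,x_k\}$.

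The key step is to identify which generator of $J$ divides $w$. Such a generator has the form $u^{[{\bf a}]}$ with $u\in G(I(G))$ and $u^{[{\bf a}]}\mid w$. Then $u$ is an edge contained in $\{i,j,k\}$. If $x_i\mid u$, then $x_i^{a_i}\mid w$, which is impossible since $\deg_{x_i}(w)=a_i-1$. Hence $u=x_jx_k$, and $x_j^{a_j}x_k^{a_k}\mid w$. Since $x_i$ does not occur in $x_j^{a_j}x_k^{a_k}$, we get
$$\deg w\geq (a_i-1)+a_j+a_k\geq a_i+a_k+1>d,$$
where the middle inequality uses $a_j\geq 2$. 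This contradicts $\deg w=d$, and so $J$ is not componentwise linear.

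The only delicate point is the case analysis on $u$: we must make sure that no edge avoiding $x_i$ other than $\{j,k\}$ can have its $[{\bf a}]$-power divide $w$. Since the support of $w$ is contained in $\{x_i,x_j,x_k\}$, the only other edge to rule out is $\{i,j\}$, which is already excluded by the $x_i$-degree bound. The assumption $a_i\geq a_j$ is harmless and is used only to make $v_1$ the generator of larger degree, so that padding $v_2$ with $x_j$ works. No separate case $d_1<d_2$ as in Lemma \ref{1necessary} is needed, because the roles of $i$ and $j$ can simply be swapped.
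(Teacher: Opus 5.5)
Your proposal is correct and follows the paper's proof essentially verbatim: the same pair $x_i^{a_i}x_k^{a_k}$, $x_j^{a_i}x_k^{a_k}$ in degree $a_i+a_k$ (after assuming $a_i\geq a_j$), connectivity of the relation graph via Lemma \ref{linrel}, and the degree count $(a_i-1)+a_j+a_k>a_i+a_k$. Two steps the paper leaves implicit are made explicit in your version: why a vertex of $x_i$-degree $a_i-1$ must occur on the path, and why the edge $\{i,j\}$ cannot supply the dividing generator.
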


\begin{proof}
Set $J:=I(G)^{[{\bf a}]}$. Suppose on the contrary that $J$ is componentwise linear.  One may assume that $a_i\geq a_j$. Let $p\in N_G(i)\cap N_G(j)$.  Set $u:=x_i^{a_i}x_p^{a_p}$ and $v:=x_j^{a_i}x_p^{a_p}$. Since $a_i\geq a_j$, one has $u,v\in J$. Let $k:=\deg(u)=\deg(v)$. As $J$ is componentwise linear, the ideal $J_{\langle k\rangle}$ has linear resolution. In particular, it is linearly related. It follows from Lemma \ref{linrel} that the relation graph $G_{J_{\langle k\rangle}}^{(u,v)}$ is connected. Therefore, there is a monomial $w\in J_{\langle k\rangle}$ in the variables $x_i, x_j, x_p$ for which $\deg_{x_i}(w)=a_i-1$. Thus, $w$ is divisible by $x_j^{a_j}x_p^{a_p}$.  It then follows that$$\deg(w)\geq (a_i-1)+a_j+a_p>k,$$where the last inequality follows from $a_j\geq 2$. This is a contradiction.
\end{proof}

\begin{Lemma}
    \label{chordaltwocompnot2}
    Let $G$ be a finite connected graph on $[n]$ and suppose that $G^c$ is chordal. Let ${\bf a}=(a_1, \ldots, a_n)\in \ZZ_{>0}^n$. Assume that there are $i,j\in [n]$ with $i \neq j$  for which $\{i,j\}\in E(G)$ and $a_i \geq 2,a_j\geq 2$. If there are $p\in N_G(i)$ and $q\in N_G(j)$ with $\{p,q\}\notin E(G)$, then $I(G)^{[{\bf a}]}$ is not componentwise linear.
\end{Lemma}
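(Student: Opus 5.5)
We first reduce to a configuration on four distinct vertices, and then run the same relation-graph argument as in Lemma \ref{chordaltwocompnot1}.

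\textbf{Reduction.} We may assume that $\{i,j\}$ has no common neighbour: if $N_G(i)\cap N_G(j)\neq\emptyset$, Lemma \ref{chordaltwocompnot1} already shows that $I(G)^{[{\bf a}]}$ is not componentwise linear. This has several consequences.
\begin{itemize}
\item $p\neq q$, since otherwise $p=q$ would be a common neighbour of $i$ and $j$.
\item $p\neq j$, since otherwise $\{p,q\}=\{j,q\}$ would be an edge. Symmetrically, $q\neq i$.
\item $\{p,j\}\notin E(G)$ and $\{q,i\}\notin E(G)$, since otherwise $p$ or $q$ would lie in $N_G(i)\cap N_G(j)$.
\end{itemize}
So $i,j,p,q$ are four distinct vertices, and among the pairs inside $\{i,j,p,q\}$ the edges of $G$ are exactly $\{i,j\},\{i,p\},\{j,q\}$. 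Set $J:=I(G)^{[{\bf a}]}$ and suppose, for a contradiction, that $J$ is componentwise linear.

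\textbf{The two test monomials.} The only elements of $G(J)$ supported on $\{x_i,x_j,x_p,x_q\}$ are
$$g_1=x_i^{a_i}x_p^{a_p},\qquad g_2=x_j^{a_j}x_q^{a_q},\qquad g_3=x_i^{a_i}x_j^{a_j}.$$
Let $k:=\max(a_i+a_p,\,a_j+a_q)$. Pad $g_1$ with a power of $x_p$ and $g_2$ with a power of $x_q$ to get
$$u:=x_i^{a_i}x_p^{k-a_i}\quad\text{and}\quad v:=x_j^{a_j}x_q^{k-a_j},$$
both in $J_{\langle k\rangle}$. Note that $\deg_{x_j}u=0$ and $\deg_{x_i}v=0$. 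Since $J_{\langle k\rangle}$ has a linear resolution, it is linearly related, so by Lemma \ref{linrel} the relation graph $G^{(u,v)}_{J_{\langle k\rangle}}$ is connected.

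Every vertex of this graph divides $\lcm(u,v)$, so it is a monomial in $x_i,x_j,x_p,x_q$ of degree $k$. Adjacent vertices differ by replacing one variable with another, so along a path from $u$ to $v$ the exponent of any fixed variable changes by at most one per step. Hence the $x_i$-exponent and the $x_j$-exponent each take every intermediate value somewhere along the path.

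\textbf{Case $k=a_j+a_q$.} Choose a vertex $w$ on the path with $\deg_{x_i}w=a_i-1$. Then neither $g_1$ nor $g_3$ divides $w$, so $g_2$ divides $w$. Since $a_i-1\geq 1$, this gives
$$\deg w\geq (a_i-1)+a_j+a_q>k,$$
a contradiction.

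\textbf{Case $k=a_i+a_p>a_j+a_q$.} Argue symmetrically with a vertex $w$ satisfying $\deg_{x_j}w=a_j-1$. Then neither $g_2$ nor $g_3$ divides $w$, so $g_1$ divides $w$, and
$$\deg w\geq a_i+a_p+(a_j-1)>k,$$
since $a_j\geq 2$. This is again a contradiction.

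The step that needs the most care is the choice of padding. It must keep both test monomials inside $J_{\langle k\rangle}$ while ensuring that $u$ has $x_j$-exponent $0$ and $v$ has $x_i$-exponent $0$, and it must give the intermediate-exponent vertex $w$ only one possible divisor among $g_1,g_2,g_3$. That is why we pad with $x_p$ and $x_q$ rather than with $x_i$ or $x_j$.
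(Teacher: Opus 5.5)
Your proof is correct and follows the paper's argument. You reduce via Lemma \ref{chordaltwocompnot1} to $N_G(i)\cap N_G(j)=\emptyset$, pad $x_i^{a_i}x_p^{a_p}$ and $x_j^{a_j}x_q^{a_q}$ to a common degree $k$, and use connectivity of the relation graph of $J_{\langle k\rangle}$ to find a vertex one short in the $x_i$- or $x_j$-exponent, which forces degree $>k$. The only difference is cosmetic: the paper pads the lower-degree monomial with $x_j$ and appeals to symmetry, which works equally well, so your closing remark that padding with $x_i$ or $x_j$ must be avoided is overstated.
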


\begin{proof}
If $N_G(i)\cap N_G(i)\neq\emptyset$, then the assertion follows from Lemma \ref{chordaltwocompnot1}. Set $J:=I(G)^{[{\bf a}]}$. Let $N_G(i)\cap N_G(i)=\emptyset$. In particular, $p\neq q$ and $\{i,q\}, \{j.p\}\notin E(G)$. Suppose on the contrary that $J$ is componentwise linear. Set $u:=x_i^{a_i}x_p^{a_p}$ and $v:=x_j^{a_j}x_q^{a_q}$. Then $u,v\in J$. By symmetry, assume that $\deg(u)\geq \deg(v)$. Define $v':=x_j^{\deg(u)-\deg(v)}v$ and $k:=\deg(u)=\deg(v')$. By the same argument as in the proof of Lemma \ref{chordaltwocompnot1}, the relation graph $G_{J_{\langle k\rangle}}^{(u,v')}$ is connected. Therefore, there is a monomial $w\in J_{\langle k\rangle}$ in the variables $x_i, x_j, x_p, x_q$ for which $\deg_{x_j}(w)=a_j-1$. Since $\{p,q\}, \{i,q\}\notin E(G)$, it follows that $w$ is divisible by $x_i^{a_i}x_p^{a_p}$. Since $a_j\geq 2$, it then follows that $$\deg(w)\geq (a_j-1)+a_i+a_p>k,$$
a contradiction.
\hspace{10.7cm}
\end{proof}

\begin{Lemma}
    \label{chordalthreecompnot}
    Let $G$ be a finite connected graph on $[n]$ and suppose that $G^c$ is chordal. Let ${\bf a}=(a_1, \ldots, a_n)\in \ZZ_{>0}^n$. Assume that there are distinct integers $i,j,k\in [n]$ with $\{i,j\}\in E(G)$ for which $a_i\geq 2, a_j\geq 2$ and $a_k\geq 2$.  Then $I(G)^{[{\bf a}]}$ is not componentwise linear.
\end{Lemma}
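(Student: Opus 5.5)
Assume, for a contradiction, that $J:=I(G)^{[{\bf a}]}$ is componentwise linear. I will reduce to the previous two non-componentwise-linearity lemmas and handle one leftover configuration by hand.

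\emph{Reduction.} If $N_G(k)$ meets $N_G(i)$ or $N_G(j)$, Lemma \ref{chordaltwocompnot1} applies to the pair $\{i,k\}$ or $\{j,k\}$. So assume $N_G(k)\cap N_G(i)=N_G(k)\cap N_G(j)=\emptyset$, and likewise $N_G(i)\cap N_G(j)=\emptyset$. If some $p\in N_G(i)$ and $q\in N_G(j)$ are non-adjacent, Lemma \ref{chordaltwocompnot2} applies to the edge $\{i,j\}$. So we may also assume every vertex of $N_G(i)$ is adjacent to every vertex of $N_G(j)$.

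\emph{Consequences.} Since $j\in N_G(i)$ and $i\in N_G(j)$, the disjointness conditions give $k\notin N_G(i)\cup N_G(j)$. Thus $k$ is adjacent to neither $i$ nor $j$. As $G$ has no isolated vertex, pick $r\in N_G(k)$. Then $r\notin N_G(i)\cup N_G(j)$, so $\{r,i\},\{r,j\}\notin E(G)$, and $r\neq i,j$. Hence on $\{i,j,k,r\}$ the only edges of $G$ are $\{i,j\}$ and $\{k,r\}$. In $G^c$ this induced subgraph is the $4$-cycle $i\text{--}k\text{--}j\text{--}r\text{--}i$, with $ij$ and $kr$ as the missing chords. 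This contradicts the chordality of $G^c$, so in fact this configuration never occurs and the reductions already finish the proof.

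\emph{Fallback.} If the combinatorics above turns out subtler than expected, I would instead run the relation-graph argument directly. Take $u=x_i^{a_i}x_j^{a_j}$ and $v=x_k^{a_k}x_r^{a_r}$, pad the smaller one to a common degree $\ell$, and apply Lemma \ref{linrel} to $J_{\langle \ell\rangle}$. A path from $u$ to $v$ must pass through a monomial whose $x_i$-degree is $a_i-1$ or whose $x_j$-degree is $a_j-1$. Such a monomial has to be divisible by $x_k^{a_k}x_r^{a_r}$, since no other generator is supported on $\{i,j,k,r\}$ without $x_i^{a_i}$ or $x_j^{a_j}$. Its degree is then too large, exactly as in Lemmas \ref{chordaltwocompnot1} and \ref{chordaltwocompnot2}. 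I expect the main obstacle to be only the bookkeeping of the disjointness and $4$-cycle cases, and that the primary route through the earlier lemmas works.
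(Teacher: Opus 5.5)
Your proof is correct. Your reductions match the paper's. Both use Lemma \ref{chordaltwocompnot1} to rule out $\{i,k\},\{j,k\}\in E(G)$, pick a neighbour $r$ of $k$ (the paper's $p$), and then rule out $\{i,r\},\{j,r\}\in E(G)$.

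The difference is the last step. The paper handles the residual configuration, where the only edges of $G$ on $\{i,j,k,r\}$ are $\{i,j\}$ and $\{k,r\}$, by a relation-graph computation with $u=x_i^{a_i}x_j^{a_j}$ and $v=x_r^{a_r}x_k^{a_k}$. You instead observe that this configuration is an induced $4$-cycle $i$--$k$--$j$--$r$--$i$ in $G^c$, so it cannot occur.

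Your route is shorter and shows that the paper's final computation is vacuous. It also shows that only Lemma \ref{chordaltwocompnot1} is needed. Your appeal to Lemma \ref{chordaltwocompnot2}, and the assumption $N_G(i)\cap N_G(j)=\emptyset$, are harmless but never used.

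The paper's computation does not use chordality in that case, but this gains no generality. If $G^c$ is not chordal, then $I(G)$ is not componentwise linear by Fr\"oberg's theorem, and Corollary \ref{Cor1.3} already gives the conclusion.

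One caveat about your fallback, although it is moot. Suppose $\deg u>\deg v$ and you pad $v$. A monomial $w$ on the path with $\deg_{x_i}(w)=a_i-1$ (or $\deg_{x_j}(w)=a_j-1$) is only forced to satisfy $\deg(w)\ge a_i-1+a_k+a_r$ (respectively $a_j-1+a_k+a_r$). This need not exceed $\ell=a_i+a_j$; take, e.g., $a_i=a_j=3$, $a_k=2$, $a_r=1$. In that case you must track $x_k$ instead, as the paper does. Then $w$ is forced to be divisible by $x_i^{a_i}x_j^{a_j}$, giving $\deg(w)\ge a_k-1+\ell>\ell$.
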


\begin{proof}
Set $J:=I(G)^{[{\bf a}]}$.  If $\{i,k\}\in E(G)$ or $\{j,k\}\in E(G)$, then the assertion follows from Lemma \ref{chordaltwocompnot1}. Let $\{i,k\}, \{j,k\}\notin E(G)$. As $k$ is not an isolated vertex of $G$, there is $p\in [n]\setminus\{i,j,k\}$ for which $\{p,k\}\in E(G)$. If either $\{i,p\}\in E(G)$ or $\{j,p\}\in E(G)$, then the assertion, again, follows from Lemma \ref{chordaltwocompnot1}. Let $\{i,p\}, \{j,p\}\notin E(G)$. Set $u:=x_i^{a_i}x_j^{a_j}, v:=x_p^{a_p}x_k^{a_k}, d_u:=\deg(u)$ and ${d_v}:=\deg(v)$.  Suppose on the contrary that $J$ is componentwise linear.  The following two cases arise.

\medskip

\noindent
{\bf (Case 1.)} Let $d_u\geq d_v$.  Set $v':=x_k^{d_u-d_v}v$ and $k:=d_u=\deg(v')$. By the same argument as in the proof of Lemma \ref{chordaltwocompnot1}, the relation graph $G_{J_{\langle k\rangle}}^{(u,v')}$ is connected. Therefore, there is a monomial $w\in J_{\langle k\rangle}$ in the variables $x_i, x_j, x_p, x_k$ for which $\deg_{x_k}(w)=a_k-1$. Since $\{i,p\}, \{j,p\}\notin E(G)$, it follows that $w$ is divisible by $u=x_i^{a_i}x_j^{a_j}$. Since $a_k\geq 2$, it then follows that $$\deg(w)\geq (a_k-1)+d_u>k,$$a contradiction.

\medskip

\noindent
{\bf (Case 2.)} Let $d_u< d_v$.  Set $u':=x_i^{d_v-d_u}u$ and $k:=d_v=\deg(u')$. Considering the relation graph $G_{J_{\langle k\rangle}}^{(u',v)}$, a similar argument as in Case 1 yields a contradiction.
\end{proof}

Finally, summarizing Lemmas \ref{chordalonecomp}, \ref{chordaltwocomp1},  \ref{chordaltwocomp2}, \ref{chordaltwocompnot1}, \ref{chordaltwocompnot2} and \ref{chordalthreecompnot} yields the following combinatorial characterization for $I(G)^{[{\bf a}]}$ to be componentwise linear.

\begin{Theorem}
    \label{chordal}
    Let $G$ be a finite connected graph on $[n]$ and suppose that $G^c$ is chordal. Given ${\bf a} \in \ZZ_{>0}^n$, the ideal $I(G)^{[{\bf a}]}$ is componentwise linear if and only if ${\bf a} \in \ZZ_{>0}^n$ satisfies the conditions given in one of Lemmata \ref{chordalonecomp}, \ref{chordaltwocomp1} and \ref{chordaltwocomp2}.
\end{Theorem}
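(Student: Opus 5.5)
The plan is to assemble the lemmas of this section by a case analysis on the set $\mathcal{A}:=\{i\in[n] : a_i\geq 2\}$. Sufficiency is immediate. If ${\bf a}$ satisfies the hypotheses of Lemma \ref{chordalonecomp}, \ref{chordaltwocomp1} or \ref{chordaltwocomp2}, then that lemma gives componentwise linearity of $I(G)^{[{\bf a}]}$. All three lemmas are stated under the standing assumptions that $G$ is connected and $G^c$ is chordal, which are exactly the hypotheses of the theorem. So the work is in the necessity direction.

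For necessity, assume $I(G)^{[{\bf a}]}$ is componentwise linear. If $|\mathcal{A}|\leq 1$, then the hypotheses of Lemma \ref{chordalonecomp} hold and we are done. Now suppose $|\mathcal{A}|\geq 2$. By Lemma \ref{chordaltwocompnot1}, any two distinct $i,j\in\mathcal{A}$ satisfy $N_G(i)\cap N_G(j)=\emptyset$. Next I split according to whether $\mathcal{A}$ is an independent set of $G$.
\begin{enumerate}[(a)]
\item If $\mathcal{A}$ is independent, then together with the pairwise disjointness of neighbourhoods, the hypotheses of Lemma \ref{chordaltwocomp1} are satisfied.
\item If $\mathcal{A}$ is not independent, pick $i,j\in\mathcal{A}$ with $\{i,j\}\in E(G)$. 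Lemma \ref{chordalthreecompnot} forbids a third element of $\mathcal{A}$, so $\mathcal{A}=\{i,j\}$, and exactly two components $a_i,a_j$ exceed one. Condition (i) of Lemma \ref{chordaltwocomp2} holds by choice. Condition (ii) follows from Lemma \ref{chordaltwocompnot1}. Condition (iii) follows from Lemma \ref{chordaltwocompnot2}: if some $p\in N_G(i)$ and $q\in N_G(j)$ had $\{p,q\}\notin E(G)$, then $I(G)^{[{\bf a}]}$ would fail to be componentwise linear.
\end{enumerate}
Hence ${\bf a}$ meets the hypotheses of Lemma \ref{chordaltwocomp2}.

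The one step needing care is a degenerate point in case (b). Here $i\in N_G(j)$ and $j\in N_G(i)$, so condition (iii), read literally, would require $i$ to be adjacent to itself. Lemma \ref{chordaltwocomp2} must therefore be read with (iii) excluding the pair $p=j$, $q=i$, or equivalently with the neighbourhoods taken in $H=G\setminus\{i,j\}$-edge, as its proof does via $N_H(1)$ and $N_H(2)$. Lemma \ref{chordaltwocompnot2} is then applied with the same convention. Its proof uses $p\neq q$ and $\{i,q\},\{j,p\}\notin E(G)$, and these hold for $p\in N_G(i)\setminus\{j\}$ and $q\in N_G(j)\setminus\{i\}$ once the neighbourhoods are disjoint. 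With this reading fixed consistently, the case analysis covers every ${\bf a}$ and the theorem follows.
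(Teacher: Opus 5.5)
Your proposal is correct and is exactly the paper's argument. The paper proves the theorem by combining Lemmas \ref{chordalonecomp}, \ref{chordaltwocomp1}, \ref{chordaltwocomp2} with Lemmas \ref{chordaltwocompnot1}, \ref{chordaltwocompnot2}, \ref{chordalthreecompnot}, using the same split on $|\mathcal{A}|$ and on whether $\mathcal{A}$ is independent.

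The ``degenerate point'' you raise is not a real issue, and condition (iii) read literally never requires $i$ to be adjacent to itself. For $p=j$ or $q=i$, condition (iii) only asks that $j$ be adjacent to a neighbour of $j$, or $i$ to a neighbour of $i$ (in particular $j$ to $i$), and this always holds. So your $N_H$-reading is equivalent to the literal one, and no convention needs fixing.
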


\section*{Statements and Declarations}
The authors have no Conflict of interest to declare that are relevant to the content of this article.

\section*{Data availability}
Data sharing does not apply to this article as no new data were created or analyzed in this study.

\end{document}